\newcommand{\col}{\text{col}}
\newcommand{\row}{\text{row}}
\newtheorem{Corollary}{Corollary}
\newtheorem{Theorem}{Theorem}
\newtheorem{Lemma}{Lemma}
\newtheorem{Definition}{Definition}
\newtheorem{Proposition}{Proposition}
\newtheorem{Example}{Example}
\newtheorem*{Remarks*}{Remarks}
\begin{document}

\begin{frontmatter}

\title{Two-dimensional Fibonacci Words: Tandem Repeats and Factor Complexity}


\author[mymainaddress]{Sivasankar M}
\corref{mycorrespondingauthor}
\cortext[mycorrespondingauthor]{Corresponding author}
\ead{ma16d028@smail.iitm.ac.in}

\author[mymainaddress]{Rama R}
\ead{ramar@iitm.ac.in}

\address[mymainaddress]{Department of Mathematics, Indian Institute of Technology Madras, Chennai, India}

\begin{abstract}
 If $x$ is a non-empty string then the repetition $xx$ is called a tandem repeat. Similarly, a tandem in a two dimensional array $X$ is a configuration consisting of a same primitive block $W$ that touch each other with one side or corner. In \cite{Apostolico:2000}, Apostolico and Brimkov have proved various bounds for the number of tandems in a two dimensional word of size $m \times n$. Of the two types of tandems considered therein, they also proved that, for one type, the number of occurrences in  an $m \times n$ Fibonacci array attained the general upper bound, $\mathcal{O}(m^{2}n \hspace{0.1cm} \mbox{log} \hspace{0.1cm} n)$. In this paper, we derive an expression for the exact number of tandems in a given finite Fibonacci array $f_{m,n}$. As a required result, we derive the factor complexities of $f_{m,n}$, $m,n \ge 0$ and that of the infinite Fibonacci word $f_{\infty, \infty}$. Generations of $f_{\infty, \infty}$ and $f_{m,n}$, for any given $m,n \ge 1$ using a two-dimensional homomorphism is also achieved.
\end{abstract}

\begin{keyword}
Fibonacci Words \sep  Fibonacci Arrays \sep  Tandems \sep Two-dimensional Factor Complexity \sep Two-dimensional Morphic Words
\MSC[2020]  	68R15 \sep 68Q45
\end{keyword}

\end{frontmatter}


\section{Introduction}
The field of combinatorics of words produces many path breaking results which are directly used in computer science, molecular biology and particle physics. To be more specific, we can mention pattern recognition, image processing techniques in computer science, DNA computing in molecular biology and crystallography in physics \cite{Lothaire:1997, PaunDNA:1998, Ramona:2008}. In these research directions, similar to palindromes, another interesting and important structure is repetitive substrings.  If $x$ is a non-empty string then the repetition $xx$ is called a tandem repeat or a square. Axel and Thue's work on square free words and avoidable patterns, provided more insights in to tandems \cite{Berstel:1992}. In DNAs, when a pattern of one or more nucleotides is repeated, with the repetitions directly adjacent to each other, we say a tandem has occurred. More information about tandem repeats can help in determining inherited traits of an individual and are useful in genealogical DNA tests. The capacity and expressiveness of genomic tandem duplication is explored in \cite{Jain:2017}. In computer science engineering, tandem repeats are used in compression algorithms \cite{Salomon:1998}.  

A natural extension of one dimensional (hereafter, sometimes denoted by $1D$) words is to two dimensions. A two dimensional (hereafter, sometimes denoted by $2D$) word of size $m \times n$ is a rectangular arrangement of symbols from an alphabet, in $m$ rows and $n$ columns. A tandem in a two dimensional array $X$ is a configuration consisting of a same primitive block $W$ that touch each other with one side or corner. Basically, tandem repeats in one dimensional strings are detected using suffix trees \cite{Gusfield:2004, Crochemore:2009}. It is natural to expect more amount of work (compared to the one dimensional setup) to detect a tandem in two dimensions. Algorithms to locate periods,palindromes, runs etc. also become more complex in a multi dimensional setup \cite{Amir:2020a, Charalampopoulos:2020, Amir:2020b, Amir:2022}. 

In \cite{Apostolico:2000},the authors have proved various bounds for the number of tandems in a two dimensional word of size $m \times n$. In fact they have considered two types of tandems. They have introduced a sequence of $2D$ words called $2D$ Fibonacci arrays and proved that the number of occurrences of one of the tandem type attains the general upper bound of $\mathcal{O}(m^{2}n \hspace{0.1cm} \mbox{log} \hspace{0.1cm} n)$  in  an $m \times n$ Fibonacci array. Like Fibonacci numbers, Fibonacci words/arrays  are very exciting. For a detailed study of Fibonacci words \cite{Berstel:1986,Yu:2000} can be referred. Continuing the work done in \cite{Apostolico:2000} on Fibonacci arrays, we count the exact number of tandems in a given Fibonacci array $f_{m,n}$.

The main contributions of this paper are:
\begin{itemize}
    \item[-]  The exact number of tandems occurring (with repetition and without repetition) in a given $f_{m,n}$ are counted. 
    \item[-] Factor complexity of finite Fibonacci words are obtained.
    \item[-] A two dimensional morphism generating the 2D infinite Fibonacci word $f_{\infty, \infty}$ is developed. 
    \item[-] A Deterministic Finite state Automaton with Output (DFAO) is constructed for $f_{\infty, \infty}$.  
\end{itemize}

This paper is organised as follows. Section 2 has the prerequisites for understanding the later sections. Section 3 explains the types of tandems that occur in a $2D$ word. Section 4 counts the number of Tandems (repetitions included) in a given $f_{m,n}$. In Section 5 we find the factor complexity of the one-dimensional Fibonacci word $f_{n}$ which is used in Section 6 to find the number of distinct tandems in $f_{m,n}$.  In Section 7 the $2D$ morphism generating $f_{\infty, \infty}$ is developed. In Section 8 the factor complexities of $f_{m,n}$ and $f_{\infty, \infty}$ are found out. Section 9 discusses the DFAO generating $f_{\infty, \infty}$.

\section{Preliminaries}

\subsection{Words Over an Alphabet}
A finite non-empty set of symbols is called an alphabet and is denoted by $\Sigma$. A word $w = x_1x_2\cdots x_n$ of length $n$, is the juxtaposition (familiarly known as concatenation) of symbols $x_1, x_2, \ldots, x_n$ taken from $\Sigma$. The length of the word $w$, that is the number of symbols in $w$, is denoted by $|w|$. The set of all words over $\Sigma$ including the empty word $\lambda$, is denoted by $\Sigma^{*}$, whereas $\Sigma^{+}$ denotes the set of all non-empty words over $\Sigma$. In fact, $\Sigma^*$ is a free monoid under the operation concatenation. A word $x \in \Sigma^*$ is a factor of another word $w \in \Sigma^*$ if $w=uxv$ for some  $u,v \in \Sigma^*$. A word $x \in  \Sigma^{*}$ is a prefix (suffix, respectively) of the word $w$ if $w = xy$ ($w=yx$, respectively) for some $y \in \Sigma^{*}$. The reversal of $w=x_{1}x_{2} \cdots x_{n}$ is defined to be the string $w^{R}=x_{n} \cdots x_{2} x_{1}$. A word $w$ is said to be a palindrome or a one-dimensional palindrome if $w=w^{R}$. Powers $w^k$, $k\ge0$  of a word $w$ are obtained by concatenating $w$ with itself, $k$ number of times. A word $w$ is said to be primitive if $w=u^{n}$ implies $n=1$ and $w=u$. A square in a word $w$ is a subword of $w$, which is of the form $xx$, $x\in \Sigma^+$. For a more elaborate study of formal language theory and combinatorics on words, the reader is referred to \cite{Kamala:2009,Lothaire:2002}.

\subsection{Two-dimensional Words}

A subset of $\Sigma^*$ is called a language. Studying the type and the grammar of the words in a language constitute the formal language theory where as analysing number of squares, palindromes etc. is combinatorics on words. Extending formal language theory and combinatorics of words to two dimensions is a challenging task. The difficulty arises due to the presence of two directions. Below a short introduction to two-dimensional languages is given. Interested reader can refer \cite{Giammarresi:1997} for further concepts.

\begin{Definition}\cite{Giammarresi:1997}
Let $\Sigma$ be an alphabet. An array $($also called a picture or two-dimensional word$)$ $u=[u_{i,j}]_{1 \leq i \leq m,1 \leq j \leq n}$ of size $(m,n)$ over  $\Sigma$ is a two-dimensional rectangular finite arrangement of letters:
$$u=
\begin{matrix}
u_{1,1} & u_{1,2} & \cdots & u_{1,n-1} & u_{1,n} \\
u_{2,1} & u_{2,2} & \cdots & u_{2,n-1} & u_{2,n} \\
\vdots & \vdots & \ddots & \vdots & \vdots \\
u_{m-1,1} & u_{m-1,2} & \cdots & u_{m-1,n-1} & u_{m-1,n} \\
u_{m,1} & u_{m,2} & \cdots & u_{m,n-1} & u_{m,n} \\
\end{matrix}$$
\end{Definition}

The number of rows and columns of $u$ are denoted, respectively, by $|u|_{\text{row}}$ and $|u|_{\text{col}}$. The array of size $(0,0)$ denoted by $\Lambda$ is the empty array. The arrays of sizes $(m,0)$ and $(0,m)$ for $m>0$ are not defined. It is noted that some authors consider these arrays also as the empty array. The set of all arrays over $\Sigma$ including the empty array, $\Lambda$, is denoted by $\Sigma^{**}$, whereas $\Sigma^{++}$ is the set of all non-empty arrays over $\Sigma$.

To locate any position or region in an array, we require a reference system \cite{Anselmo:2014}. Given an array $u$, the set of coordinates $\{1,2,\ldots,|u|_{\text{row}}\} \times \{1,2,\ldots,|u|_{\text{col}}\}$ is referred to as the domain of $u$. We also use t, b, l, r (the initials of the words top, bottom, left, right, respectively) to detect the sides or boundaries of $u$. A subdomain or subarray of an array $u$, denoted by $u[(i,j),(i',j')]$, is the portion of $u$ located in the region $\{i,i+1,\ldots,i'\} \times \{j,j+1,\ldots,j'\}$, where $1 \leq i \leq |u|_{\row},1 \leq j \leq |u|_{\col}$. Below we state the concatenation operation between two arrays and further definitions associated with 2D arrays.

\begin{Definition}
\cite{Giammarresi:1997} Let $u,v$ be arrays over $\Sigma$, of sizes $(m_{1},n_{1})$ and $(m_{2},n_{2})$, respectively with $m_{1},n_{1},m_{2},n_{2}>0$. Then,
\begin{enumerate}
\item The column concatenation of $u$ and $v$, denoted by $\obar$, is a partial operation, defined if $m_{1}=m_{2}=m$, and is given by
$$u \obar v=
\begin{matrix}
u_{1,1} & \cdots & u_{1,n_{1}} & v_{1,1} & \cdots & v_{1,n_{2}}  \\
\vdots & & \vdots & \vdots & & \vdots \\
u_{m,1} & \cdots & u_{m,n_{1}} & v_{m,1} & \cdots & v_{m,n_{2}}
\end{matrix}$$

\item The row concatenation of $u$ and $v$, denoted by $\ominus$, is a partial operation, defined if $n_{1}=n_{2}=n$, and  is given by 


$$u \ominus v=
\begin{matrix}
u_{1,1} & \cdots & u_{1,n}  \\
\vdots & & \vdots \\
u_{m_{1},1} & \cdots & u_{m_{1},n} \\
v_{1,1} & \cdots & v_{1,n} \\
\vdots & & \vdots \\
v_{m_{2},1} & \cdots & v_{m_{2},n}
\end{matrix}$$

\end{enumerate} 

\end{Definition}

Note that, the operations of column and row concatenations are associative but not commutative and $\Lambda$ is the neutral element for both the operations.

With these operations defined, the definitions of a subword, prefix, primitive word and palindromes follow.

\begin{Definition} \cite{Kulkarni:2019}
Given $u \in \Sigma^{**}$, $v \in \Sigma^{**}$ is said to be a subword (respectively, proper subword) of $u$, denoted by $v \leq_{sw} u$ (respectively $v <_{sw} u)$ if $u=x \obar (x' \ominus v \ominus y') \obar y$ or $u=x \ominus (x' \obar v \obar y') \ominus y$ for some $x,x',y,y' \in \Sigma^{**}$ (respectively if any of $x,x',y,y'$ are non-empty).
\end{Definition}

\begin{Definition}\cite{Kulkarni:2019}
Let $u \in \Sigma^{**}$. An array $v \in \Sigma^{**}$ is said to be a prefix of $u$ $($suffix of $u$, respectively$)$, denoted by $v \leq_{p}^{2d} u$  $(v \leq_{s}^{2d} u$, respectively$)$ if $u=(v \ominus x) \obar y$  $(u=y \obar (x \ominus v)$, respectively$)$ for some $x,y \in \Sigma^{**}$. Furthermore, $v$ is said to be a proper prefix of $u$ $($proper suffix of $u$, respectively$)$ denoted by $v <_{p}^{2d} u$ $(v <_{s}^{2d} u$, respectively$)$ if either $x \neq \Lambda$ or $y \neq \Lambda$, or both $x,y \in \Sigma^{++}$.


\end{Definition}

\begin{Definition}\cite{Gamard:2017}
If $x \in \Sigma^{++}$, then by $(x^{k_{1} \obar})^{k_{2} \ominus}$ we mean that the array is constructed by repeating $x$, $k_{1}$ times column-wise to get $x^{k_{1} \obar}$, and repeating $x^{k_{1} \obar}$, $k_{2}$ times row-wise. An array $w \in \Sigma^{++}$ is said to be 2D \textit{primitive} if $w=(x^{k_{1} \obar})^{k_{2} \ominus}$ implies that $k_{1}k_{2}=1$ and $w=x$. 
\end{Definition}

By $Q_{2d}$, let us denote the set of all 2D primitive arrays. Also, if $w=(x^{k_{1} \obar})^{k_{2} \ominus}$ and $x$ is 2D primitive, then $x$ is said to be a 2D-\textit{primitive root} of $w$ denoted by $\rho_{2d}(w)$. Note that 2D-\textit{primitive root} is always unique for a given array.

\begin{Definition}\cite{Berthe:2001}
Let $u=[u_{i,j}]_{1 \leq i \leq m,1 \leq j \leq n}$ be an array of size $(m,n)$. The reverse image of $u$, denoted by $u^{R}$ is $[u_{m-i+1,n-j+1}]_{1 \leq i \leq m,1 \leq j \leq n}$.
Furthermore, if $u$ is equal to its reverse image $u^{R}$, then $u$ is said to be a \textit{two-dimensional palindrome}. 
By $P_{2d}$, we denote the set of all 2D palindromes in $\Sigma^{**}$.
\end{Definition}

Just for completion, recall that the transpose of $u=[u_{i,j}]_{1 \leq i \leq m,1 \leq j \leq n}$, denoted by $u^{T}$ is defined as: 
\begin{align*}
    u^{T}=(u_{1,1} \obar u_{2,1} \obar \cdots \obar u_{m,1}) \ominus \cdots \ominus (u_{1,n} \obar u_{2,n} \obar \cdots \obar u_{m,n}).
\end{align*}

\subsection{Two-dimensional Fibonacci Words}\label{sect3}

We are familiar with the Fibonacci numerical sequence $F(n)$ is defined recursively as $F(0)=1$, $F(1)=1$, $F(n)=F(n-1)+F(n-2)$ for $n \geq 2$. Similarly, the sequence $\{f_{n}\}_{n \geq 0}$ of Fibonacci words over $\Sigma=\{a,b\}$, is defined recursively by $f_{0}=a$, $f_{1}=b$, $f_{n}=f_{n-1}f_{n-2}$ for $n \geq 2$ . Note that the Fibonacci words are always defined over the binary alphabet and $|f_{n}|=F(n)$ for $n \geq 0$.

The 2D extension to Fibonacci arrays is defined in \cite{Apostolico:2000}, as below. 

\begin{Definition}
\cite{Apostolico:2000} Let $\Sigma=\{a,b,c,d\}$. The sequence of Fibonacci arrays, $\{f_{m,n}\}_{m,n \geq 0}$, is defined as:
\begin{enumerate}
\item $f_{0,0}=\beta,f_{0,1}=\gamma,f_{1,0}=\delta,f_{1,1}=\alpha$  where $\alpha, \beta,\gamma$ and $\delta$ are symbols from $\Sigma$ with some but not all, among $\alpha,\beta,\gamma$ and $\delta$ might be identical.
\item For $k \geq 0$ and $m,n \geq 1$, $$f_{k,n+1}=f_{k,n} \obar f_{k,n-1}, \hspace{0.2cm}  f_{m+1,k}=f_{m,k} \ominus f_{m-1,k}.$$
\end{enumerate} 
\end{Definition}

For convenience, throughout this paper we fix $f_{0,0}=a,f_{0,1}=b,f_{1,0}=c,f_{1,1}=d$, where some but not all of $a,b,c$ and $d$ might be identical. Let us call $f_{k,n+1}=f_{k,n} \obar f_{k,n-1}$ as column-wise expansion and $f_{m+1,k}=f_{m,k} \ominus f_{m-1,k}$ as row-wise expansion.  Example \ref{ex_for_fmn_defn} explains the construction of $f_{2,3}$ in two ways. In the first way, row-wise expansions precede column-wise expansions and in the second way  column-wise expansions precede row-wise expansions. 

\begin{Example}\label{ex_for_fmn_defn}
Let $\Sigma=\{a,b,c,d\}$. Then,
\begin{align*}
    f_{2,3}=f_{1,3} \ominus f_{0,3}&=(f_{1,2} \obar f_{1,1}) \ominus (f_{0,2} \obar f_{0,1})\\
    &=(f_{1,1} \obar f_{1,0} \obar f_{1,1}) \ominus (f_{0,1} \obar f_{0,0} \obar f_{0,1}).
\end{align*}
Or,
\begin{align*}
    f_{2,3}=f_{2,2} \obar f_{2,1}&=f_{2,1} \obar f_{2,0} \obar f_{2,1}\\
    &=(f_{1,1} \ominus f_{0,1}) \obar (f_{1,0} \ominus f_{0,0}) \obar (f_{1,1} \ominus f_{0,1}).
\end{align*}
Since, $f_{0,0}=a,f_{0,1}=b,f_{1,0}=c,f_{1,1}=d$, $f_{2,3}$ we can write, 
\begin{align*}
    f_{2,3}=\begin{matrix}
d & c & d\\
b & a & b
\end{matrix}
\end{align*}
\end{Example}

We state here some required properties of $f_{m,n}$.

\begin{Lemma}\cite{Kulkarni:2019}\label{lemprop}
Let $f_{m,n} ,  (m,n = 0,1,2,\dotsc)$ be the sequence of 2D Fibonacci arrays over  $\Sigma = \{ a,b,c,d\}$, with $f_{0,0} = a, f_{0,1} = b, f_{1,0} = c, f_{1,1} = d$. Also let $\Sigma_1 = \{a,b\}$, $\Sigma_2 = \{c,d\}$, $\Sigma_1' = \{a,c\}$ and $\Sigma_2' = \{b,d\}$ such that $\Sigma = \Sigma_1 \cup \Sigma_2 = \Sigma_1' \cup \Sigma_2'$. Then,  
\begin{itemize}
\item[a.] Any row of $f_{m,n}$ is a 1D Fibonacci word over either $\Sigma_1$ or $\Sigma_2$.

\item[b.] If $\Sigma_1 \ne \Sigma_2$ then all the rows of $f_{m,n}$, over $\Sigma_1$ are identical and all the rows of $f_{m,n}$, over $\Sigma_2$ are identical.

\item[c.] Any column of $f_{m,n}$ is a 1D Fibonacci word over either $\Sigma_1'$ or $\Sigma_2'$.

\item[d.] If $\Sigma_1' \ne \Sigma_2'$ then all the columns of $f_{m,n}$, over $\Sigma_1'$ are identical and all the columns of $f_{m,n}$, over $\Sigma_2'$ are identical. 

\item[e.] If $\Sigma_1 = \Sigma_2 (\Sigma_1'=\Sigma_2')$, then either all the rows$($columns$)$ of $f_{m,n}$ are identical or a set of rows are identical and are complementary to the set of remaining rows $($columns, respectively$)$ which are identical.
\end{itemize}
\end{Lemma}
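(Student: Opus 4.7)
The plan is to derive all five parts by induction, using the row-wise recursion $f_{m,n} = f_{m-1,n} \ominus f_{m-2,n}$ for (a), (b), and the row portion of (e), and the symmetric column-wise recursion $f_{m,n} = f_{m,n-1} \obar f_{m,n-2}$ for (c), (d), and the column portion of (e). As preparation I would first identify the ``base'' rows and columns: the row $f_{0,n}$ is exactly the 1D Fibonacci word $f_n$ written over $\Sigma_1 = \{a,b\}$, because the recurrence $f_{0,n+1} = f_{0,n} \obar f_{0,n-1}$ with seeds $f_{0,0}=a,\; f_{0,1}=b$ reproduces the 1D Fibonacci recurrence letter for letter; likewise $f_{1,n}$ is the 1D Fibonacci word $f_n$ over $\Sigma_2=\{c,d\}$. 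Symmetrically, the columns $f_{m,0}$ and $f_{m,1}$ are 1D Fibonacci words over $\Sigma_1'=\{a,c\}$ and $\Sigma_2'=\{b,d\}$ respectively.

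For (a) and (b) I would induct on $m$. The cases $m=0,1$ are handled by the base observations. For the inductive step, $f_{m,n}=f_{m-1,n}\ominus f_{m-2,n}$, so each row of $f_{m,n}$ is a row of $f_{m-1,n}$ or of $f_{m-2,n}$, and the induction hypothesis yields (a). Unfolding the row-wise recursion all the way down further shows that $f_{m,n}$ consists of $F(m)$ rows, each of which is literally a copy of $f_{0,n}$ or a copy of $f_{1,n}$, stacked in some pattern. When $\Sigma_1\neq\Sigma_2$ the alphabets distinguish the two row types, and all copies within a type are identical by construction, giving (b). Parts (c) and (d) follow from the mirror-image induction on $n$ using the column-wise recursion.

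For (e) one takes $\Sigma_1=\Sigma_2$ and splits on how the two alphabets are identified. If $a=c$ and $b=d$, then $f_{0,n}$ and $f_{1,n}$ are literally the same 1D Fibonacci word, so every row of $f_{m,n}$ is that same word; if instead $a=d$ and $b=c$, then $f_{1,n}$ is the letterwise complement of $f_{0,n}$, and the row-set of $f_{m,n}$ partitions into two identical bunches that are complementary to each other. The column version of (e) is handled identically under $\Sigma_1'=\Sigma_2'$.

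The step I expect to be the main obstacle is not the induction itself but a consistency check that must be settled before it can be run: the definition of $f_{m,n}$ supplies \emph{two} recurrences, and one must verify that the row-wise and column-wise unfoldings produce the same array. Concretely this reduces to the ``exchange'' identity $(A \obar B)\ominus(C \obar D)=(A \ominus C)\obar(B \ominus D)$ whenever the sizes are compatible, which is a general property of the two concatenations; once this is in hand the two expansions commute freely, the counts of rows and columns at each stage agree, and all the inductive arguments above go through without friction.
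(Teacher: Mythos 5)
The paper does not actually prove this lemma: it is imported verbatim from the cited reference (Kulkarni et al.), so there is no in-paper argument to compare yours against. Judged on its own, your proof is sound and complete. The base identifications are right ($f_{0,n}$ and $f_{1,n}$ are the 1D word $f_n$ rendered over $\Sigma_1$ and $\Sigma_2$, and $f_{m,0}$, $f_{m,1}$ over $\Sigma_1'$, $\Sigma_2'$), and unrolling the row recursion shows every row of $f_{m,n}$ is \emph{literally} a copy of $f_{0,n}$ or of $f_{1,n}$, which gives (a) and (b) in one stroke rather than just ``is some Fibonacci word''; the column statements and (e) follow by the symmetry and the two-case analysis of how $\{a,b\}=\{c,d\}$ can happen (where the ``not all four letters identical'' proviso correctly rules out the degenerate $a=b$ subcase). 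You also put your finger on the one point that genuinely needs care and that the source definition leaves implicit: the two recurrences must produce the same array, which reduces to the exchange law $(A \obar B)\ominus(C\obar D)=(A\ominus C)\obar(B\ominus D)$ and an induction on $m+n$; the paper only gestures at this through Example~\ref{ex_for_fmn_defn}, where $f_{2,3}$ is computed both ways. The only thing I would ask you to make explicit is that this well-definedness check is itself an induction (you need both $f_{m-1,n}$ and $f_{m,n-1}$ already unambiguous before the exchange law applies), but that is a presentational point, not a gap.
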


For more properties of 2D Fibonacci words the reader can  refer \cite{Kulkarni:2019} and \cite{Mahalingam:2018}.

\section{Tandem Repeats in \texorpdfstring{$f_{m,n}$}{}}
In molecular genetics, data is represented as a sequence of characters. Many algorithms in molecular biology try to find structures called tandems, which are patterns of nucleotides repeated adjacent to each other in a DNA. In fact more than half of the human genome contains repeated sequences \cite{Lander:2001,Mundy:2004}. They are related to inherited traits of an individual and play a role in DNA tests. Even an approximate tandem repeat is related to some human diseases \cite{Landau:1993}. Tandem repeats in $2D$ words were studied in \cite{Apostolico:2000, Charalampopoulos:2020, Amir:2020a, Amir:2020b} with varying interests.  

\subsection{Types of Tandems}
\begin{Definition} \cite{Apostolico:2000}
In a two dimensional array $X$, a tandem is a configuration consisting of two occurrences of a same primitive block $W$ that touch each other with one side($Type~I$ tandem) or with a corner($Type~II$ tandem). 
\end{Definition}

We further divide Type I and Type II tandems as defined below.
\begin{Definition}
Let $u$ be a 2D word. For a primitive block $W$, the $Type~I$ tandem $W \obar W$ is called a $Type~I(a)$ tandem of $u$ and  $W \ominus W$ is called a $Type~I(b)$ tandem of $u$.
\end{Definition}

\begin{Definition}
Let $u$ be a 2D word. For a primitive block $W$ which is a subarray of $u$, and for $A$,$B \in \Sigma^{++}$, whose sizes are the same as that of $W$, if the $Type~II$ tandem is such that $(W \obar A) \ominus (B \obar W)$ is a sub array of $u$, then it is called a $Type~II(a)$ tandem of $u$. And if the $Type~II$ tandem is such that $(C \obar W) \ominus (W \obar D)$ is a sub array of $u$, for some $C$,$D \in \Sigma^{++}$, whose sizes are the same as that of $W$, then it is called a $Type~II(b)$ tandem of $u$.
\end{Definition}

All the four types of tandems are shown in Figure \ref{4types}.
\begin{figure}[ht]
    \centering
    \includegraphics[scale=0.35]{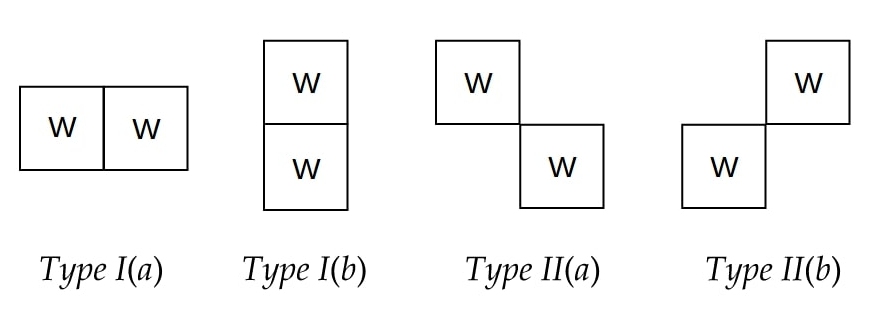}
    \caption{Types of tandems that can occur in a two-dimensional array}
    \label{4types}
\end{figure}

\begin{Remarks*} 
Block $W$ is called the root of the tandem. A tandem need not be a 2D array. Also note that, $Type~I(b)~(Type~II(b)$, respectively$)$ tandem will be a $90^\circ$ rotation of $Type~I(a)~(Type~II(a)$, respectively$)$ and vice versa.
\end{Remarks*}

Formulas to count the exact number of squares including repetitions and without including repetitions(i.e. distinct) are established in \cite{Fraenkel:1999}. Both the theorems are recalled here. Note that, there is a corrigendum \cite{Fraenkel:2014} to \cite{Fraenkel:1999} correcting an error in the formula for number of squares, derived in \cite{Fraenkel:1999}. The correct formula was verified in \cite{Du:2016} also, using a logic-based decision procedure, implemented as $Walnut$.

Denote by $D(n)$ and by $R(n)$ the exact number of distinct and repeated squares, respectively, in $f_n$. Then,

\begin{Theorem}\cite{Fraenkel:1999}\label{Dofn}
For $n \ge 5$, $D(n) = 2(F(n-2) -1)$.
\end{Theorem}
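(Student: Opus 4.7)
The plan is to proceed by strong induction on $n$, exploiting the recursive decomposition $f_n = f_{n-1}f_{n-2}$.

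For the base case $n=5$, I would enumerate directly: $f_5 = babbabab$ contains precisely the four distinct squares $bb$, $abab$, $baba$, and $babbab$, matching $2(F(3)-1)=4$.

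For the inductive step, I would partition the distinct squares of $f_n$ into those already occurring as factors of the prefix $f_{n-1}$ -- counted by $D(n-1) = 2(F(n-3)-1)$ via the induction hypothesis -- and the genuinely new distinct squares. Any square lying entirely within the suffix $f_{n-2}$ is \emph{not} new, because $f_{n-2}$ is a prefix of $f_{n-1}$ (through $f_{n-1}=f_{n-2}f_{n-3}$). Hence every new distinct square must have each of its occurrences in $f_n$ straddling the cut between $f_{n-1}$ and $f_{n-2}$. The induction then closes once we show that the count of such straddling new squares equals
\begin{equation*}
D(n)-D(n-1) \;=\; 2\bigl(F(n-2)-F(n-3)\bigr) \;=\; 2F(n-4).
\end{equation*}

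To count the straddling new squares I would rely on two structural features of Fibonacci words. First, the primitive roots of squares in an infinite Fibonacci word are very restricted: each such root is a conjugate of a Fibonacci-related factor, and its length lies in a small, Fibonacci-indexed set, so only a handful of root lengths need to be considered. Second, the self-similar identity $f_n = f_{n-2}f_{n-3}f_{n-2}$ (obtained by substituting $f_{n-1}=f_{n-2}f_{n-3}$ into $f_n=f_{n-1}f_{n-2}$) exhibits $f_{n-2}$ as both a middle factor and a suffix of $f_n$, giving a rigid near-periodic structure around the seam that controls which positions can host a square extending over the cut. Classifying the boundary-straddling occurrences by root length and verifying that the admissible ones contribute exactly $2F(n-4)$ genuinely new distinct factors completes the induction.

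The principal obstacle is the careful seam analysis: one must rule out that a putative new square is in fact already a factor of $f_{n-1}$ via some non-obvious alignment, and simultaneously ensure that roots of different lengths do not produce the same factor (i.e.\ no double-counting). Both tasks are made tractable by the rigidity of the Fibonacci factor structure, but they require a meticulous case distinction driven by the admissible root lengths $F(k)$, and this is where I expect the bulk of the technical work to lie.
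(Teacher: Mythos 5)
First, note that the paper does not prove this statement at all: Theorem~\ref{Dofn} is imported verbatim from \cite{Fraenkel:1999}, so there is no in-paper proof to compare against, and your attempt has to be judged against the original source. Your inductive skeleton is sound as far as it goes: the base case is correct ($f_5=babbabab$ indeed has exactly the four distinct squares $bb$, $abab$, $baba$, $(bab)^2$), the observation that a square contained in the suffix $f_{n-2}$ of $f_n=f_{n-1}f_{n-2}$ already occurs in the prefix $f_{n-1}$ (since $f_{n-2}$ is a prefix of $f_{n-1}$) is right, and the target increment $D(n)-D(n-1)=2F(n-4)$ is the correct arithmetic consequence of the claimed formula.

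The genuine gap is that the entire content of the theorem is concentrated in the step you defer: showing that \emph{exactly} $2F(n-4)$ new distinct squares appear at stage $n$. Everything before that point is bookkeeping; the ``meticulous case distinction'' you announce is the proof, and it is not executed. Two specific difficulties are glossed over. First, the set you need to count is not ``squares with a seam-straddling occurrence'' but ``squares \emph{all} of whose occurrences in $f_n$ straddle the seam'': a straddling occurrence does not certify newness, because the same word may reappear wholly inside $f_{n-1}$ under a different alignment, and ruling this out requires quantitative control of \emph{all} occurrences of each candidate root (in effect, the occurrence-counting machinery that \cite{Fraenkel:1999} develops). Second, the appeal to ``roots are conjugates of Fibonacci words'' (S\'e\'ebold's characterization) bounds the candidates but does not by itself tell you which conjugates of which $f_k$ first appear at stage $n$; the identity $D(n)=2(F(n-2)-1)=2\sum_{k=0}^{n-4}F(k)$ hints that the new squares at each stage are governed by a single Fibonacci index, but that is precisely what must be proved. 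For comparison, the original argument does not run this induction: it directly classifies the squares of $f_n$ by the length of their primitive root and counts, for each admissible root length $F(k)$, how many conjugates of $f_k$ squared occur, which yields $D(n)$ (and $R(n)$) in closed form without the seam analysis. As it stands, your proposal is a plausible program, not a proof.
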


\begin{Theorem}\cite{Fraenkel:2014}\label{Rofn}
For $n \ge 3$, $R(n) = \frac{4}{5}nF(n)-  \frac{2}{5}(n+6)F(n-1) - 4F(n-2)+n+1$.
\end{Theorem}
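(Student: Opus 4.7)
\textbf{Proof plan for Theorem \ref{Rofn}.} The strategy is to exploit the recursion $f_n = f_{n-1} f_{n-2}$ and to split the multiset of square occurrences in $f_n$ into (i) those lying entirely inside the prefix copy of $f_{n-1}$, (ii) those lying entirely inside the trailing copy of $f_{n-2}$, and (iii) those straddling the junction between the two blocks. Only the third class is new at stage $n$; writing its cardinality as $B(n)$ and subtracting a correction $C(n)$ for squares that are counted in both (i) and (ii) (these live inside the suffix $f_{n-2}$ of $f_{n-1} = f_{n-2} f_{n-3}$), I obtain the skeleton
\begin{equation*}
R(n) \; = \; R(n-1) + R(n-2) + B(n) - C(n).
\end{equation*}

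First I would establish the combinatorial classification of admissible roots. Using the classical fact that every square occurring in a Sturmian word with slope a quadratic irrational has root length equal to a continued-fraction denominator of that slope, specialised here to the golden mean, every square of $f_n$ has root length $F(k)$ for some $3 \le k \le n-1$ with root a conjugate of $f_{k-1}$. For each admissible $k$ I would count (a) the number of occurrences of each such conjugate inside $f_{n-1}$, fed back into the recursion via $R(n-1)$, and (b) the number of straddling occurrences, which by the self-similarity $f_n = f_{n-1} f_{n-2}$ reduces to identifying suffix--prefix matches of $f_{k-1}$-conjugates across the junction. These matches can be enumerated using the synchronisation afforded by Fine--Wilf applied to $f_{n-1}$ and $f_{n-2}$, and the sum yields an explicit closed form for $B(n)$ of the shape $\alpha F(n) + \beta F(n-1) + \gamma n + \delta$ with small rational constants.

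Substituting the closed form of $B(n)$ back into the skeleton produces an inhomogeneous linear recurrence for $R(n)$ of Fibonacci type with a polynomial-times-Fibonacci forcing term. Solving by the standard ansatz
\begin{equation*}
R(n) \; = \; (An+B) F(n) + (Cn+D) F(n-1) + E F(n-2) + P n + Q
\end{equation*}
and matching coefficients should yield precisely $A = \tfrac{4}{5}$, the coefficient $-\tfrac{2}{5}(n+6)$ of $F(n-1)$, the term $-4F(n-2)$, and the affine correction $n+1$. Brute-force tabulation at $n = 3,4,5$ then fixes the integration constants and serves as a sanity check.

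The main obstacle will be step one, the bookkeeping of $B(n)$ and $C(n)$: ensuring that every straddling square is counted with the right multiplicity, and that no square lying inside the trailing $f_{n-2}$ gets attributed both to the $R(n-2)$ term and to the $R(n-1)$ term through the suffix $f_{n-2}$ of $f_{n-1}$. It is in exactly this accounting that the original argument of \cite{Fraenkel:1999} contained the error repaired in \cite{Fraenkel:2014}, so I would cross-validate the final closed form against a direct enumeration -- for instance via a Walnut-style decision procedure as in \cite{Du:2016} -- for several small values of $n$ before declaring the induction closed.
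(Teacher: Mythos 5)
The paper does not prove this statement at all: Theorem~\ref{Rofn} is recalled verbatim from the corrigendum \cite{Fraenkel:2014} to Fraenkel and Simpson's paper \cite{Fraenkel:1999}, and the authors explicitly treat it as an imported result (further verified mechanically in \cite{Du:2016}). So there is no in-paper argument to compare yours against; the only honest ``proof'' here is the citation.

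Judged on its own terms, your proposal is a plan rather than a proof, and the gap sits exactly where the difficulty lies. The skeleton $R(n)=R(n-1)+R(n-2)+B(n)-C(n)$ is reasonable, but everything hinges on actually deriving closed forms for $B(n)$ (straddling squares) and $C(n)$ (squares double-counted in the overlap $f_{n-2}$), and you defer both: you assert that $B(n)$ ``has the shape $\alpha F(n)+\beta F(n-1)+\gamma n+\delta$'' and that coefficient matching ``should yield'' the stated constants. Fitting an ansatz to the values at $n=3,4,5$ and checking a few more cases does not establish the formula unless the inhomogeneous recurrence itself is rigorously proved, and the forcing term is precisely the part left undone --- this is also exactly the bookkeeping in which the original paper \cite{Fraenkel:1999} erred, as you yourself note. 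There is additionally an indexing slip in your classification of roots: in this paper's convention $|f_k|=F(k)$, so a square root of length $F(k)$ is a conjugate of $f_k$, not of $f_{k-1}$. If you intend to supply a genuine proof rather than re-cite \cite{Fraenkel:2014}, you must carry out the enumeration of square occurrences by root length (the route actually taken by Fraenkel and Simpson) or fully compute $B(n)$ and $C(n)$; otherwise the correct move is simply to state the theorem with its citation, as the paper does.
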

\section{Number of Tandems of \texorpdfstring{$f_{m,n}$}{}}\label{RepeatTand}
In this section we count the  number of tandems (with repetition included, i.e. same tandems, but positioned at different locations are included in the counting) in the given Fibonacci array $f_{m,n}$. 

\subsection{The number of \texorpdfstring{$Type~I$}{} Tandems in the Fibonacci Array \texorpdfstring{$f_{m,n}$}{}}

First we prove two Propositions to help our counting.
\begin{Proposition}\label{count1}
Let $S$ be a $Type~I(a)$ tandem of size $(r,c)$, with $r \ge 1$ and $c \ge 2$ is even. Then there are $\frac{r(r+1)}{2}$ number of $Type I(a)$ tandems in $S$. 
\end{Proposition}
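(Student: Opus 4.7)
The plan is to parametrize the Type I(a) sub-tandems of $S$ by contiguous row ranges and then count those ranges. Since $S$ is a Type I(a) tandem of size $(r,c)$ with $c$ even, by definition we can write $S = W \obar W$ for a primitive block $W$ of size $(r, c/2)$, and the vertical split of $S$ is located between columns $c/2$ and $c/2+1$.

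For the ``forward'' direction, for every pair $(i,j)$ with $1 \le i \le j \le r$, let $T_{i,j}$ be the subarray of $S$ obtained by keeping the full column range but restricting to rows $i,i+1,\ldots,j$. Since row-restriction commutes with the column-concatenation operator $\obar$, we obtain $T_{i,j} = V_{i,j} \obar V_{i,j}$, where $V_{i,j}$ is the analogous row-restriction of $W$. I would then argue that $V_{i,j}$ inherits primitivity from $W$: if $V_{i,j}$ were a non-trivial $(k_1^\obar, k_2^\ominus)$-power of some smaller block, then pasting this structure back column-wise inside $S$ would force the same $k_1$-fold horizontal period on $W$, contradicting the primitivity of $W$. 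Hence each $T_{i,j}$ is a genuine Type I(a) tandem occurring in $S$.

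For the converse direction, I would show that every Type I(a) sub-tandem $S'$ of $S$ is one of the $T_{i,j}$ above. The key point is that the vertical midline of any such $S'$ must sit exactly at column $c/2$ of $S$. If it sat at some other column, then the equality of the two horizontal halves of $S'$ would give a shorter horizontal period $p < c/2$ of a nontrivial vertical strip of $S = W \obar W$, which (via a standard periodicity/Fine--Wilf style argument applied column-wise) would propagate to a period of $W$ strictly smaller than $|W|_{\col}$, again contradicting primitivity of $W$. Once the midline is fixed, $S'$ is determined by choosing its top and bottom rows, i.e.\ a pair $(i,j)$ with $1 \le i \le j \le r$.

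Finally, the number of such pairs is $\sum_{i=1}^{r}(r-i+1) = \frac{r(r+1)}{2}$, which is the claimed count. I expect the main obstacle to be the converse direction, specifically the periodicity argument that forces every Type I(a) sub-tandem inside $S$ to share the vertical midline of $S$; the primitivity hypothesis on $W$ is used essentially, and the hypothesis $c \ge 2$ is what makes the vertical split meaningful in the first place.
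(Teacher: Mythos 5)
Your forward direction coincides with the paper's entire argument: parametrize the full-width sub-tandems of $S$ by contiguous row intervals $(i,j)$ and count $\sum_{i=1}^{r}(r-i+1)=\frac{r(r+1)}{2}$. One of your supporting claims is shaky, though. Two-dimensional primitivity of $W$ is \emph{not} inherited by a row-restriction $V_{i,j}$, and your ``pasting back'' argument does not establish it: for $W=\begin{smallmatrix} a & a \\ a & b \end{smallmatrix}$ the block $W$ is 2D primitive while its first row $aa$ is a horizontal square, and a single row-band of $W$ being a power forces no period on the remaining rows of $W$. (The paper silently ignores this point as well; in the Fibonacci application it is harmless, since each row of the root of such a tandem is the root of a square in a 1D Fibonacci word and hence primitive.)

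The more serious problem is your converse direction, which asserts something false. A Type I(a) tandem $S=W\obar W$ in general contains Type I(a) tandems that do not span the full column range and whose midline is nowhere near column $c/2$: already in one dimension, $S=aabaab$ (primitive root $W=aab$) contains the squares $aa$ at columns $1$--$2$ and $4$--$5$. A square occurring strictly inside $S$ yields a period only of the subarray it covers, not of a full vertical strip of $S$, so there is no Fine--Wilf propagation to a period of $W$ and no contradiction with primitivity; the argument you sketch cannot close. The statement is true only under the reading the paper intends and actually uses in Theorem \ref{t1a}: one counts the tandems of full width $c$ obtained by restricting the row range of $S$, while the narrower tandems inside $S$ are accounted for separately in Theorem \ref{t1a}, one family for each square occurrence in the first row of $f_{m,n}$. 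Dropping your converse (or restating it only for width-$c$ sub-tandems, where it is immediate) reduces your proof to the paper's.
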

\begin{proof}
Observe that if $S$ is a $Type~ I(a)$ tandem, then every row of $S$ will be a $Type ~I(a)$ tandem. Conversely, if every row of an array is a $Type ~I(a)$ tandem then the array itself will be a $Type ~I(a)$ tandem. 

Denote the top left corner of $S$ as position $[1,1]$. Consider the $Type~ I(a)$ tandem, $S_1$, with $r$ rows, and $c$ columns, located at $[1,1]$ (i.e. $S_1 = S$ itself). The sub arrays of $S_1$ consisting of its first $i$ rows alone, $1 \le i \le r$, will be $Type ~I(a)$ tandems of sizes $(i,c)$. Hence there are $r$ tandems, located at $[1,1]$. Now consider the tandem, $S_2$, with $(r-1)$ rows, located at $[2,1]$. The sub arrays of $S_2$ consisting of its first $i$ rows alone, $1 \le i \le r-1$ will be $Type ~I(a)$ tandems of sizes $(i,c)$. Hence there are $r-1$ tandems, located at $[2,1]$. Continuing this process we get $r+(r-1)+ \cdots + 1 = \frac{r(r+1)}{2}$ $Type~ I(a)$ tandems in $S$.
\end{proof}

\begin{Proposition}
Let $S$ be a $Type~I(b)$ tandem of size $(r,c)$, with $c \ge 1$ and $r \ge 2$ is even. Then there are $\frac{c(c+1)}{2}$ number of $Type ~I(b)$ tandems in $S$. 
\end{Proposition}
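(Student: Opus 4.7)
The plan is to mirror the argument of Proposition~\ref{count1} under a $90^\circ$ rotation, invoking the duality already noted in the Remarks: a $Type~I(b)$ tandem is just the $90^\circ$ rotation of a $Type~I(a)$ tandem. Since $S$ has size $(r,c)$ with $r\ge 2$ even, writing $S = W \ominus W$ for a primitive block $W$ of size $(r/2, c)$ gives the structural analogue of the row-wise decomposition used before, now column by column.

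First I would record the dual observation: $S$ is a $Type~I(b)$ tandem if and only if every column of $S$ is a $Type~I(b)$ tandem. The forward direction is immediate because restricting the equation $S = W \ominus W$ to a single column gives that column as a concatenation of two identical halves; conversely, if each column splits into two equal halves vertically, those halves assemble into a block $W$ with $S = W \ominus W$. The same reasoning shows that any contiguous set of columns of $S$ is itself a $Type~I(b)$ tandem of size $(r, k)$, where $k$ is the number of chosen columns.

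Next I would carry out the enumeration by sweeping the starting column. Label the top-left corner of $S$ as position $[1,1]$. For each $j \in \{1,2,\ldots,c\}$, the subarray $S[(1,j),(r,c)]$ is a $Type~I(b)$ tandem of size $(r, c-j+1)$, and for every $k$ with $1 \le k \le c-j+1$ the further subarray $S[(1,j),(r,j+k-1)]$ consisting of its first $k$ columns is again a $Type~I(b)$ tandem of size $(r, k)$, by the observation above. This produces $c-j+1$ tandems anchored at column~$j$. Summing,
\begin{equation*}
\sum_{j=1}^{c} (c-j+1) \;=\; c + (c-1) + \cdots + 1 \;=\; \frac{c(c+1)}{2}.
\end{equation*}

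There is essentially no real obstacle here: the only point requiring a line of justification is the preservation of the $Type~I(b)$ property under taking contiguous column ranges, which is transparent once one writes $S = W \ominus W$. Everything else is a verbatim translation of the previous proof, with rows and columns swapped and the roles of $r$ and $c$ interchanged, exactly reflecting the Remarks following the definition of the four tandem types.
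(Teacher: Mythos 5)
Your proof is correct and is exactly the argument the paper intends: the paper's own proof is just the one-line remark that it is ``similar to the proof of Proposition~\ref{count1},'' and you have written out that dual argument explicitly (columns in place of rows, sweeping the starting column, and summing $c+(c-1)+\cdots+1$). The only caveat, which applies equally to the paper's proof of Proposition~\ref{count1}, is that calling each contiguous column range a tandem tacitly assumes the corresponding root block is primitive.
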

\begin{proof}
The proof is similar to the proof of Proposition\ref{count1}.
\end{proof}

Now through a combinatorial argument we can count the number of $Type~ I (a)$ and $Type~I (b)$ tandems in $f_{m,n}$.
\begin{Theorem}\label{t1a}
For $m \ge 1$ and $n \ge 3$, let $R(m,n;I(a))$ denote the exact number of $Type~I(a)$ tandems occurring in $f_{m,n}$. Then,
\begin{center}
$R(m,n;I(a)) = R(n) \frac{F(m) (F(m)+1)}{2}$,
\end{center}
 where $R(n)$ is the number of squares in the Fibonacci word $f_n$ (see Theorem \ref{Rofn}).
\end{Theorem}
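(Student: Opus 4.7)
The approach reduces the 2D count to the product of the 1D Fibonacci square count $R(n)$ with the row-wise multiplier $\frac{F(m)(F(m)+1)}{2}$ supplied by Proposition \ref{count1}. I would begin from Lemma \ref{lemprop}(a), which ensures every row of $f_{m,n}$ is (structurally) the one-dimensional Fibonacci word $f_n$, either over $\Sigma_1$ or over $\Sigma_2$. Since whether a factor is a square is a purely combinatorial property, independent of which two-letter alphabet is used, the column ranges at which a 1D square occurs are identical for every row of $f_{m,n}$, and there are exactly $R(n)$ such positioned squares per row by Theorem \ref{Rofn}.

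Next I would set up a correspondence. Any Type $I(a)$ tandem $W \obar W$ of $f_{m,n}$, being a horizontal concatenation, restricts on each constituent row to a 1D square at the same column range; hence every Type $I(a)$ tandem lives within a column range that is a square position of $f_n$. Conversely, for each of the $R(n)$ 1D squares $vv$ of $f_n$ occupying some column range $[j, j+2w-1]$, the full $F(m) \times 2w$ sub-array of $f_{m,n}$ at those columns is simultaneously a horizontal square in every row and so is a ``full-height'' tandem $S_j = W_j \obar W_j$ of size $(F(m), 2w)$. Every Type $I(a)$ tandem of $f_{m,n}$ therefore sits inside exactly one $S_j$, uniquely determined by the starting column and the root-width.

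Applying Proposition \ref{count1} to each $S_j$ produces $\frac{F(m)(F(m)+1)}{2}$ Type $I(a)$ sub-tandems per $S_j$, obtained by selecting any contiguous row range. Since sub-tandems coming from different $S_j$ occupy different column ranges, they are distinct as positioned occurrences and no double-counting occurs. Summing over the $R(n)$ full-height tandems yields
\[
R(m,n;I(a)) = R(n) \cdot \frac{F(m)(F(m)+1)}{2},
\]
as claimed.

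The main obstacle is the primitivity bookkeeping that Proposition \ref{count1} silently requires: one must verify that for each $S_j$, both the left half $W_j$ and every row-restriction of $W_j$ are 2D primitive, so that the configurations being counted really are Type $I(a)$ tandems in the strict sense of the definition. This is a structural check performed with Lemma \ref{lemprop}(b,d,e), which describes precisely which rows of $f_{m,n}$ are identical and how the complementary rows are arranged inside $W_j$; once this primitivity is confirmed for the Fibonacci structure, the counting above goes through immediately.
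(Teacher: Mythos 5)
Your proposal follows essentially the same route as the paper's proof: use Lemma \ref{lemprop} to lift each of the $R(n)$ positioned squares of $f_n$ to a full-height Type $I(a)$ tandem of size $(F(m),2l)$, apply Proposition \ref{count1} to each to obtain the factor $\frac{F(m)(F(m)+1)}{2}$, and multiply. You are in fact somewhat more careful than the paper, which does not explicitly address the no-double-counting step or the primitivity of the roots that Proposition \ref{count1} presupposes.
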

\begin{proof}
Due to the characteristics of $f_{m,n}$, as mentioned in the Lemma \ref{lemprop}, if there is a $Type~ I(a)$ tandem (square) of length $2l$, $1\le l \le \lfloor \frac{F(n
)}{2} \rfloor $ in the first row, occurring at the position $[1,s], 1\le s \le F(n)-1$, then there will be squares of the same length $2l$, at the positions $[i,s]$ for all $2 \le i \le F(m)$. Therefore, there will be a $Type~ I(a)$ tandem of size $(F(m),2l)$ located at $[1,s]$. Now by Proposition \ref{count1}, we get $\frac{F(m)(F(m)+1)}{2}$ number of $Type~ I(a)$ tandems from this tandem. Note that, for every square present in the first row of $f_{m,n}$, we get $\frac{F(m)(F(m)+1)}{2}$ number of $Type ~I(a)$ tandems.

By Theorem \ref{Rofn}, we have,
\[
R(n) = \frac{4}{5} nF_n - \frac{2}{5}(n+6)F_{n-1} -4F_{n-2}+n+1~
\] number of squares present in the first row of $f_{m,n}$. Hence, $f_{m,n}$ has $R(n) \frac{F(m) (F(m)+1)}{2}$ number of $Type~ I(a)$ tandems. 
\end{proof}

\begin{Theorem}
For $m \ge 3$ and $n \ge 1$, Let $R(m,n;I(b))$ denote the exact number of $Type~ I(b)$ tandems occurring in $f_{m,n}$. Then,
\begin{center}
$R(m,n;I(b)) = R(m) \frac{F(n) (F(n)+1)}{2}$,
\end{center}
 where $R(m)$ is the number of squares in the Fibonacci word $f_m$ (see Theorem \ref{Rofn})
\end{Theorem}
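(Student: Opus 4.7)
The plan is to mirror the proof of Theorem \ref{t1a}, but exchanging the roles of rows and columns. The key structural facts I would need are parts (c) and (d) of Lemma \ref{lemprop}, which say that every column of $f_{m,n}$ is a one-dimensional Fibonacci word (over $\Sigma_1'$ or $\Sigma_2'$), and that all columns over the same sub-alphabet are identical. This is exactly the vertical analogue of the horizontal uniformity used in Theorem \ref{t1a}.

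First I would fix attention on the leftmost column of $f_{m,n}$, which is a copy of $f_m$ (up to the alphabet). By Theorem \ref{Rofn}, it contains $R(m)$ squares, each of which is a vertical $W\ominus W$ with $W$ a primitive one-column block. Now pick any such square of row-height $2l$ starting at row $s$ of the first column. Using Lemma \ref{lemprop}(c)--(d), the same vertical square appears in every column of $f_{m,n}$ at the same vertical position $[s,j]$ for $1\le j\le F(n)$, because the columns that share an alphabet are identical, and the two pairs of complementary columns extend simultaneously. Thus for each square in the first column one obtains a genuine two-dimensional Type~I(b) tandem of size $(2l,F(n))$ located at $[s,1]$.

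Next I would apply the row/column dual of Proposition \ref{count1} (the second, unnumbered proposition in the excerpt, whose proof was stated to be similar). That proposition says a Type~I(b) tandem of size $(r,c)$ with $c\ge 1$ and $r\ge 2$ even contains $\frac{c(c+1)}{2}$ Type~I(b) sub-tandems. Applying it with $c=F(n)$ and $r=2l$ yields $\frac{F(n)(F(n)+1)}{2}$ Type~I(b) tandems contributed by each square in the first column. Summing over the $R(m)$ squares of $f_m$ gives the claimed
\[
R(m,n;I(b)) \;=\; R(m)\,\frac{F(n)(F(n)+1)}{2}.
\]

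The main obstacle, as in Theorem \ref{t1a}, is establishing that every Type~I(b) tandem in $f_{m,n}$ is counted exactly once this way, i.e.\ that Type~I(b) tandems are in bijection with pairs (square in first column, horizontal sub-block of full width obtained by stretching via Proposition~2). I would argue this by observing that the leftmost column of any Type~I(b) tandem is itself a vertical square in a column of $f_{m,n}$; by Lemma \ref{lemprop}(d) this square also appears in the first column at the same vertical position. Conversely, any Type~I(b) tandem is uniquely determined by (i) the square it induces in the first column, and (ii) the horizontal window it occupies, and the counting via the dual of Proposition \ref{count1} enumerates (ii) without repetition. Care must be taken with the edge case where $\Sigma_1'=\Sigma_2'$ (part (e) of Lemma \ref{lemprop}); there either all columns coincide or they split into two complementary identical groups, and in both subcases a vertical square in the first column still propagates identically (up to the complement, which preserves squareness) so the count remains valid.
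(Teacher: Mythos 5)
Your proposal is correct and takes exactly the approach the paper intends: the paper's own proof of this theorem is the single line ``The proof is similar to the proof of Theorem \ref{t1a},'' and what you have written is precisely that dual argument spelled out (first column in place of first row, Lemma \ref{lemprop}(c)--(d) in place of (a)--(b), and the unnumbered dual of Proposition \ref{count1} supplying the factor $\frac{F(n)(F(n)+1)}{2}$). Your added remarks on exact-once counting and the $\Sigma_1'=\Sigma_2'$ edge case go beyond what the paper records, but they do not change the method.
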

\begin{proof}
The proof is similar to the proof of Theorem \ref{t1a}.
\end{proof}

\subsection{The Number \texorpdfstring{$Type~ II$}{} Tandems in the Fibonacci Array \texorpdfstring{$f_{m,n}$}{}}
In this section we count the  number of $Type~ II$ tandems (repetition included) in the given Fibonacci array $f_{m,n}$.

\begin{Proposition} \label{pro5}
Let $\Sigma = \{a,b,c,d\}$ be the alphabet. Let $A,B,C,D \in \Sigma^{++}$ be arrays of same size such that $(A \obar B) \ominus (C \obar D)$ is a sub array  of $f_{m,n}$. Then, $A = D = W$ (i.e. a $Type~II(a)$ tandem occurs) iff $A = B = C = D = W$.     
\end{Proposition}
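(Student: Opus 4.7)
The reverse implication is immediate (if all four blocks equal $W$, certainly $A = D = W$), so the task is to prove the forward direction: assuming $A = D = W$, I need to show that $B = C = W$ as well.

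Fix coordinates. Place the block $(A \obar B) \ominus (C \obar D)$ inside $f_{m,n}$ with its top-left corner at some position $(i_0, j_0)$, and let $(r,c)$ denote the common size of $A, B, C, D$. Written out in coordinates, the hypothesis $A = D$ is
\[
f_{m,n}[i_0 + i,\ j_0 + j] \;=\; f_{m,n}[i_0 + r + i,\ j_0 + c + j] \qquad (0 \le i < r,\ 0 \le j < c).
\]

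The crux is to upgrade this pointwise coincidence into an equality between entire rows of $f_{m,n}$. By Lemma \ref{lemprop}(a)(b), every row of $f_{m,n}$ is a one-dimensional Fibonacci word over exactly one of the sub-alphabets $\Sigma_1 = \{a,b\}$ or $\Sigma_2 = \{c,d\}$, and any two rows over the same sub-alphabet are identical as full rows. Since $\Sigma = \{a,b,c,d\}$ consists of four distinct symbols, $\Sigma_1$ and $\Sigma_2$ are disjoint, so any two rows of $f_{m,n}$ that share even a single common letter are automatically identical throughout. For each fixed $i$, the equality $A_{i,0} = D_{i,0}$ provides such a shared letter between rows $i_0 + i$ and $i_0 + r + i$, and these two rows of $f_{m,n}$ must therefore agree everywhere.

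With row-identity in hand, the remaining conclusions are direct substitutions. Reading the identity $f_{m,n}[i_0 + i,\ k] = f_{m,n}[i_0 + r + i,\ k]$ at $k = j_0 + j$ gives $A_{i,j} = C_{i,j}$, and reading it at $k = j_0 + c + j$ gives $B_{i,j} = D_{i,j}$; combined with $A = D = W$, this yields $A = B = C = D = W$. The only delicate move is precisely this promotion of pointwise coincidence to row-wise identity, and that is exactly what the rigid Fibonacci-array structure of Lemma \ref{lemprop} supplies. A fully symmetric argument via Lemma \ref{lemprop}(c)(d) on columns would work equally well, and is the route I would fall back on to handle degenerate situations in which some of the four letters happen to coincide (so that Lemma \ref{lemprop}(b) no longer applies directly); the purely trivial collapsed case where all letters are identical requires no argument at all.
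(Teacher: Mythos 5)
Your proof is correct and follows essentially the same route as the paper's: both use Lemma \ref{lemprop} to promote the coincidence $A=D$ into identity of the corresponding full rows of $f_{m,n}$, and then read off $A=C$ and $B=D$ from those identical rows. Your version is merely more explicit about coordinates and about the degenerate case where some of $a,b,c,d$ coincide, which the paper's proof glosses over.
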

\begin{proof}
If $A = B = C = D = W$, then obviously there occurs a $Type ~II(a)$ tandem. Conversely, Let $A = D = W$ (i.e. a $Type ~II(a)$ tandem occurs) in $(A \obar B) \ominus (C \obar D)$. Then $A$ and $D$ have identical first rows over a same alphabet say $\Sigma' \subset \Sigma$. By Lemma \ref{lemprop}, the first rows of $(A \obar B)$ and $(C \obar D)$ will be identical. A similar argument on the other rows show that $A = B = C = D = W$.     
\end{proof}
\begin{Definition}\cite{Apostolico:2000} \label{quarticdefn}
Given a 2D array $X$, a quartic in $X$ is a configuration consisting of the form 
\begin{tabular}{|c|c|}
\hline
$W$&$W$\\
\hline
$W$&$W$\\
\hline
\end{tabular}
where block $W$ is primitive.
\end{Definition}
From Proposition \ref{pro5} and Definition \ref{quarticdefn}  we infer that, in Fibonacci arrays, $Type ~II(a)$ tandems occur only as a part of quartics. Hence, the number of $Type~ II(a)$ tandems in a Fibonacci array equals the number of quartics in the Fibonacci array. In the next theorem we count the number of quartics and hence the number of $Type~ II(a)$ tandems.
\begin{Theorem}\label{t2a}
For $m,n \ge 3$, let $R(m),R(n)$ denote the number of squares $($repetitions included$)$ in the Fibonacci words $f_m,f_n$ respectively. Then, there are $R(m)R(n)$ quartics in the Fibonacci array $f_{m,n}$ and hence there are as many $Type~ II(a)$ tandems in $f_{m,n}$.
\end{Theorem}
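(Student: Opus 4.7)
The plan is to set up a bijection between quartics in $f_{m,n}$ and pairs $(s_h,s_v)$, where $s_h$ is a primitively rooted square in $f_n$ and $s_v$ is a primitively rooted square in $f_m$. Once such a bijection is in place, the quartic count is $R(m)R(n)$ by Theorem \ref{Rofn}, and the paragraph preceding the theorem has already reduced the $Type~II(a)$ count to the quartic count, which completes the proof.

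For the forward map I would take a quartic at position $(i,j)$ with $2D$ primitive root $W$ of size $(r,c)$. The top edge $f_{m,n}[(i,j),(i,j+2c-1)]$ is a horizontal square of length $2c$ sitting inside row $i$, and Lemma \ref{lemprop}(a) identifies that row with a copy of $f_n$ (over $\Sigma_1$ or $\Sigma_2$), producing a square in $f_n$ at column $j$ with root length $c$. Symmetrically, Lemma \ref{lemprop}(c) yields a square in $f_m$ at row $i$ with root length $r$. To see that both of these $1D$ roots are primitive, I would argue contrapositively: if a row $v$ of $W$ were a proper power $v_0^k$ with $k \ge 2$, then Lemma \ref{lemprop}(b,e) would force every row of $W$ to take the same form $\tilde v_0^{\,k}$ (up to an alphabet swap), expressing $W$ as $k$ horizontal copies of a narrower block and contradicting $2D$ primitivity of $W$; the columns of $W$ are handled identically.

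For the inverse, given a primitively rooted square in $f_n$ at column $j$ with root length $c$ and one in $f_m$ at row $i$ with root length $r$, I would set $W := f_{m,n}[(i,j),(i+r-1,j+c-1)]$. Lemma \ref{lemprop}(a) propagates the horizontal square across all rows $i,\ldots,i+2r-1$, and Lemma \ref{lemprop}(c) propagates the vertical square across all columns $j,\ldots,j+2c-1$, which forces $f_{m,n}[(i,j),(i+2r-1,j+2c-1)] = (W \obar W) \ominus (W \obar W)$. To confirm that $W$ is $2D$ primitive, I would note that every row of $W$ equals the $1D$ primitive horizontal-square root (up to alphabet) and every column equals the $1D$ primitive vertical-square root, so any factorization $W = (x^{k_1 \obar})^{k_2 \ominus}$ with $k_1 k_2 > 1$ would force either a row (if $k_1 > 1$) or a column (if $k_2 > 1$) to be a proper power.

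The main obstacle I anticipate is cleanly mediating between $1D$ primitivity of the row/column roots and $2D$ primitivity of $W$ through the alphabet-split structure of Lemma \ref{lemprop}; this is what permits the propagation step to transport a single horizontal (respectively, vertical) occurrence to every row (respectively, column) of the quartic without creating mismatches at the $\Sigma_1/\Sigma_2$ boundary. Once this is handled, the bijection is immediate, the count $R(m)R(n)$ drops out of Theorem \ref{Rofn}, and Proposition \ref{pro5} closes the gap between quartics and $Type~II(a)$ tandems.
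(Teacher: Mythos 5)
Your proposal is correct and follows essentially the same route as the paper: quartics are identified with pairs consisting of a square in a row (a copy of $f_n$) and a square in a column (a copy of $f_m$), Lemma \ref{lemprop} propagates each one-dimensional occurrence through the whole block, and Proposition \ref{pro5} converts the quartic count into the $Type~II(a)$ count. You are in fact more careful than the paper about the $2D$ primitivity of the root $W$; the one loose end is that your bijection is with \emph{primitively rooted} squares while $R(n)$ from Theorem \ref{Rofn} counts all squares --- these coincide because Fibonacci words contain no fourth powers, a fact worth stating explicitly (the paper's own proof silently ignores the primitivity requirement altogether).
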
   
\begin{proof}
We know that, every row of a $Type ~I(a)$ tandem will be a $Type ~I(a)$ tandem and every column of $Type ~I(b)$ tandem will be a $Type~I(b)$ tandem. Since a quartic is both a $Type ~I(a)$ and a $Type ~I(b)$ tandem, only the sub arrays having the intersection of a $Type ~I(a)$ tandem  and a $Type ~I(b)$ tandem as its domain, can be quartics of $f_{m,n}$. In fact a $Type ~I(a)$ tandem of size $(F(m),2l)$, $1\le l \le \lfloor \frac{F(n
)}{2} \rfloor$ and  a $Type ~I(b)$ tandem of size $(2k,F(n))$, $1\le k \le \lfloor \frac{F(m
)}{2} \rfloor$ will create a quartic of size $(2k,2l)$ (Refer Figure \ref{quarfig}).  Now, as there are $R(m),R(n)$ number squares, respectively, in the Fibonacci words $f_m,f_n$ there will be $R(m)$ number of squares in the first column and $R(n)$ number of squares the first row of $f_{m,n}$. Hence, there will be $R(m)R(n)$ quartics in the Fibonacci array $f_{m,n}$. Therefore by proposition \ref{pro5} there are as many $Type ~II(a)$ tandems in $f_{m,n}$. (Values of $R(m)$ and $R(n)$ are given in proposition \ref{count1} as derived in \cite{Fraenkel:1999}) .
\end{proof}

\begin{figure}[ht]
    \centering
    \includegraphics[scale=.35]{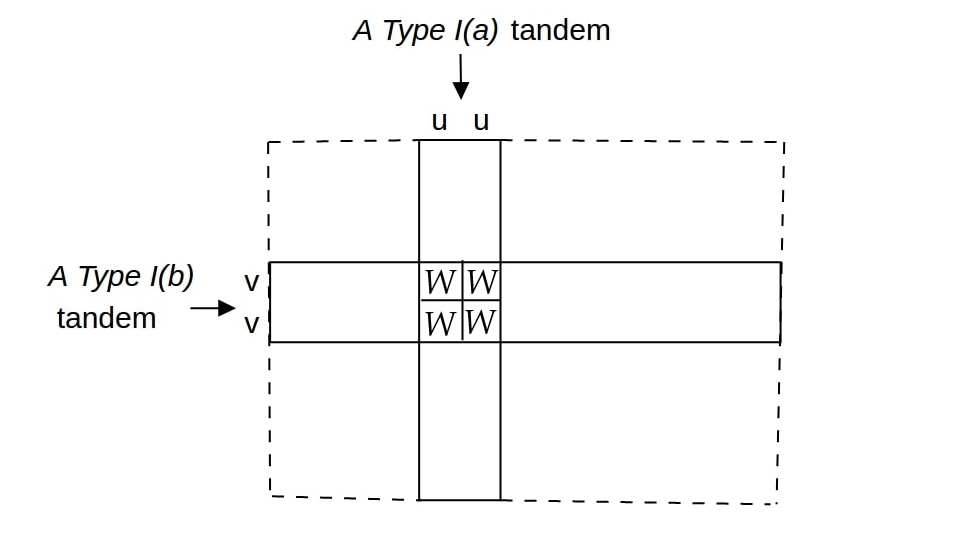}
    \caption{Formation of a quartic}
    \label{quarfig}
\end{figure}

\begin{Theorem}\label{t2b}
There are $R(m)R(n)$ number of $Type~ II(b)$ tandems in $f_{m,n}$.
\end{Theorem}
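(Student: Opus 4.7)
The plan is to mirror the argument of Theorem \ref{t2a}, with the roles of the two diagonals of a quartic interchanged. The key conceptual point is that, in $f_{m,n}$, a Type~$II(b)$ tandem is just as rigid as a Type~$II(a)$ tandem: it cannot appear ``by accident'' but is always completed into a full quartic.

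First I would establish the Type~$II(b)$ analog of Proposition \ref{pro5}: if $(C \obar W) \ominus (W \obar D)$ is a subarray of $f_{m,n}$ with $W$ primitive and the two copies of $W$ occupy the top-right and bottom-left positions, then in fact $C = W = D$, so that the configuration is a quartic. The proof is a direct transcription of the proof of Proposition \ref{pro5}. The first rows of the two $W$-blocks lie in two distinct rows $i$ and $i'$ of $f_{m,n}$ but coincide as one-dimensional words and thus share a common sub-alphabet, so by Lemma \ref{lemprop}(b) the entire rows $i$ and $i'$ of $f_{m,n}$ are identical. Comparing the column intervals occupied by $C$ in row $i$ and by (the bottom-left copy of) $W$ in row $i'$, and similarly for $D$ versus the top-right copy of $W$, forces the first rows of $C$, $W$, and $D$ to agree. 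Iterating this over all $|W|_{\row}$ rows of the top and bottom bands gives $C = W = D$.

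Next I would observe that every quartic in $f_{m,n}$ carries exactly one Type~$II(b)$ tandem, namely the pair of $W$'s on its anti-diagonal, and conversely, by the analog above, every Type~$II(b)$ tandem arises in this way from a unique quartic. This yields a bijection between Type~$II(b)$ tandems of $f_{m,n}$ and quartics of $f_{m,n}$. By Theorem \ref{t2a} there are $R(m) R(n)$ quartics in $f_{m,n}$, and the stated count follows.

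The main obstacle is the analog of Proposition \ref{pro5}, and specifically the alphabet bookkeeping when invoking Lemma \ref{lemprop}(b): one must check that rows $i$ and $i'$ are over the same sub-alphabet rather than complementary ones, so that they are truly identical and not merely complementary. Since each of these rows contains the same first row of $W$ as a factor, alphabet compatibility is automatic, but this step should be spelt out; a symmetric column-wise version of the argument, using Lemma \ref{lemprop}(d), should also be noted in case the two $W$-blocks are aligned column-by-column instead. Once this bookkeeping is in place, the rest reduces to the combinatorial transcription of the Type~$II(a)$ proof.
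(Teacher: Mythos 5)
Your proposal is correct and follows essentially the same route as the paper: the paper likewise argues that a Type~$II(b)$ tandem in $f_{m,n}$ can only occur as part of a quartic, identifies the count with the number of quartics, and invokes Theorem~\ref{t2a} to get $R(m)R(n)$. You simply spell out the Type~$II(b)$ analog of Proposition~\ref{pro5} and the alphabet bookkeeping in Lemma~\ref{lemprop}, which the paper leaves implicit.
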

\begin{proof}
Like $Type~ II(a)$ tandems, a $Type~ II(b)$ tandem can occur as a part of some quartic only. Hence the number of $Type~ II(b)$ tandems will be the same as the number of quartics in $f_{m,n}$. Hence by the Theorem \ref{t2a} there will be $R(m)R(n)$ number of $Type~ II(b)$ tandems in $f_{m,n}$.
\end{proof}

\section{Number of Distinct Factors of \texorpdfstring{$f_{n}$}{}}
We derive an important result of factor complexity of finite 1D Fibonacci words here. This will help us to count the distinct $Type~I$, $Type~II$ tandems in $f_{n}$ and $f_{m,n}$. Analysing the factor complexity of an infinite word is crucial as we can conclude many things about the nature of the word. For an infinite word $u$, the factor complexity function is bounded if $u$ is ultimately periodic. Conversely, $u$ will be ultimately periodic, if $p_n(u) \le n$ for some $n$ \cite{Shallit:2003}. Balanced words, Sturmian words, Arnoux-Rauzy word over a ternary alphabet all have been characterised through their complexity functions. Entropy of an infinite word and unavoidable patterns in an infinite word are also closely related to the concept complexity function.

\begin{Definition}\label{subwodcomplexity}\cite{Lothaire:2002}
For an infinite word $w$, the subword complexity function of $w$, $p_w(n)$, counts the number of distinct subwords of length $n$ in $w$. The subword complexity sequence of $w$ is the sequence $p_w = (p_w(1), p_w(2), p_w(3), \dotsc)$.  
\end{Definition}

Recall that, the 1D infinite Fibonacci word \cite{Lothaire:2002},
$$f_{\infty}= \lim_{n\to\infty} h^n(a) =h^{\omega}(a)= babbabab\cdots \cdots$$
is the fixed point of the morphism $h(a)=b, h(b)=ba$ on $\Sigma = \{a,b\}$.
\begin{Proposition}\label{p_of_w}\cite{Lothaire:2002}
 The subword complexity function of $f_{\infty}$ is $p_{f_{\infty}}(n) = n+1$. That is the infinite Fibonacci word has exactly $n+1$ factors of length $n$.  
\end{Proposition}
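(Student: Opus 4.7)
The plan is to establish $p_{f_\infty}(n) = n+1$ by proving two matching inequalities. For the lower bound, I would first show that $f_\infty$ is not ultimately periodic. The density of the letter $b$ in the prefix $f_n$ equals $F(n-1)/F(n)$, which tends to $1/\phi$ as $n \to \infty$, where $\phi = (1+\sqrt{5})/2$. Since $1/\phi$ is irrational and any ultimately periodic binary word has rational letter frequencies, $f_\infty$ is aperiodic. The Morse--Hedlund theorem then yields $p_{f_\infty}(n) \ge n+1$ for every $n \ge 1$.

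For the upper bound I would use the standard binary-alphabet identity
\[
p_{f_\infty}(n+1) - p_{f_\infty}(n) \;=\; s(n),
\]
where $s(n)$ counts the \emph{right special} factors of length $n$, namely those factors $u$ for which both $ua$ and $ub$ occur in $f_\infty$. This identity follows because every factor of an infinite word has at least one right extension, and in a binary alphabet an extra extension is contributed exactly when the factor is right special. Together with the base value $p_{f_\infty}(1) = 2$, it therefore suffices to show $s(n) = 1$ for every $n \ge 0$. The lower bound already forces $s(n) \ge 1$, so the real content is the bound $s(n) \le 1$.

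To prove uniqueness of the right special factor of each length, I would induct on $n$, exploiting that $f_\infty$ is the fixed point of $h:a\mapsto b,\ b\mapsto ba$. The base case $s(0)=1$ is immediate, as the empty factor is right special. For the inductive step I would apply \emph{desubstitution}: using the synchronization property of $h$ (every occurrence of $b$ in $f_\infty$ opens a unique $h$-block), any sufficiently long factor of $f_\infty$ admits a unique decomposition into $h$-blocks. This lets me attach to a right special factor $v$ of length $n+1$ a strictly shorter factor $u = h^{-1}(v')$, where $v'$ is obtained from $v$ by trimming at most one border letter, and to verify that $u$ is itself right special. Arguing that $v \mapsto u$ is injective between right special factors at consecutive scales propagates $s(k)=1$ from shorter scales to all $n$.

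The main obstacle is this desubstitution step: I must control the $h$-preimage near the boundary of $v$ and ensure that the two-letter right-extendability of $v$ transfers faithfully to the two-letter right-extendability of $u$. Both points can be handled by using that $h$ is injective and that the letter in $f_\infty$ immediately following a complete $h$-block is determined by the letter following the corresponding symbol in the preimage, together with the fact that the initial letter of every $h$-image is $b$. A safer but more elementary fallback, should the desubstitution become fiddly, is to identify the right special factor of length $n$ explicitly (for example, as a specific reversed prefix of $f_\infty$) and verify by direct inspection using the Lemma~\ref{lemprop}-style structural facts that no other length-$n$ factor is right special. Either route concludes $s(n)=1$, and iterating from $p_{f_\infty}(1)=2$ yields $p_{f_\infty}(n) = n+1$, as claimed.
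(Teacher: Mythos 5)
The paper offers no proof of this proposition --- it is recalled verbatim from \cite{Lothaire:2002} as a known fact --- so there is no in-paper argument to compare against; I can only judge your plan on its own merits. Its architecture is the standard and correct one: aperiodicity (via the limiting frequency $F(n-1)/F(n)\to 1/\phi$ of $b$ along the prefixes $f_n$, which is irrational) plus Morse--Hedlund gives $p_{f_\infty}(n)\ge n+1$, and the binary-alphabet identity $p(n+1)-p(n)=s(n)$ correctly reduces the upper bound to showing $s(n)\le 1$. (A minor quibble: $s(n)\ge 1$ follows from aperiodicity via the strict-increase form of Morse--Hedlund, not from the inequality $p(n)\ge n+1$ by itself; but you never need $s(n)\ge1$, since $s(n)\le1$ and $p(1)=2$ already give $p(n)\le n+1$, which combines with the lower bound.)

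The genuine gap is in the desubstitution induction. The map $v\mapsto u$ itself is sound: $v$ right special forces $v$ to end in $b$ (no $aa$ occurs), trimming a possible leading $a$ and the final $b$ leaves a complete concatenation of $h$-blocks equal to $h(u)$, the two extensions $vb$ and $va$ desubstitute to $ua$ and $ub$ respectively, and $v$ is recoverable from $u$ together with $|v|$. But the step ``two distinct right special factors of length $n+1$ yield two distinct right special factors at a single shorter length, contradicting the induction hypothesis'' does not go through as stated, because $|h(u)|=|u|+|u|_b$ depends on the letter content of $u$: two same-length words $v_1\ne v_2$ can desubstitute to right special factors $u_1\ne u_2$ of \emph{different} lengths, and one right special factor at each of two lengths contradicts nothing. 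Concretely, $v_1=a\,h(u_1)\,b$ and $v_2=b\,h(u_1)\,b=h(au_1)\,b$ have equal length yet desubstitute to $u_1$ and $au_1$; excluding this configuration needs an extra argument (essentially that a bispecial factor of $f_\infty$ has only one left extension that is right special), which your sketch does not supply. Either add that bookkeeping, or fall back on one of the self-contained classical routes you allude to: prove $f_\infty$ is balanced and invoke the fact that balanced aperiodic binary words satisfy $p(n)\le n+1$, or identify the unique right special factor of each length explicitly as a reversed prefix and show nothing else qualifies.
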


Proposition \ref{p_of_w} says that the 1D infinite Fibonacci word $f_{\infty}$ has exactly $n+1$ factors of length $n$ for any $n \ge 1$. This characterisation itself is sometimes used as the definition of 1D Fibonacci words (in fact the broad category of 1D Sturmian words). Though the count for distinct factors $($of any length$)$ of the infinite Fibonacci word is already available, the count for the number of distinct factors of the finite Fibonacci word $f_n$, $n\ge 2$, is not available. In this section we derive a formula for this count. Later we use it to count the number of distinct tandems in a given 2D Fibonacci array, $f_{m,n}$.

We recall the following theorem regarding the positions of all the distinct factors of length $k \ge 1$, of $f_{\infty} = f$.
\begin{Theorem}\label{all fact} \cite{Chuan:2005} Let $n \ge 2$ and $F(n) \le k < F(n+1)$. Define 
$$z_{j}^{(k)}= \begin{cases}
f[j+1;k], & 0 \le j \le F(n)-1\\
f[j+F(n+1)-k;k], & F(n) \le j \le k
\end{cases}$$ where $f[i';j']$, $j' \ge i'$ is the sub word of $f$ of length $j'$ starting at position $i'$. Then the words $z_{0}^{(k)}$,$z_{1}^{(k)}$,$\cdots$,$z_{k}^{(k)}$ are the $k+1$ distinct factors of $f$ of length $k$ which are listed in the order of their first occurrences in $f$.   
\end{Theorem}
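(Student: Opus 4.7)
The plan is to combine Proposition~\ref{p_of_w}, which guarantees exactly $k+1$ distinct factors of length $k$ in $f$, with an explicit first-occurrence argument. Since the total count is already pinned down, it suffices to verify that the $k+1$ candidate words $z_0^{(k)},\ldots,z_k^{(k)}$ occur for the first time, respectively, at the positions $1,2,\ldots,F(n)$ and $F(n+2)-k,\,F(n+2)-k+1,\ldots,F(n+1)$, so that they are pairwise distinct and listed in increasing order of first occurrence. Strict monotonicity of this list of positions is automatic, since $F(n)<F(n+2)-k$ is equivalent to $k<F(n+1)$, which is part of the hypothesis.

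The structural engine is the identity $f_{n+2}=f_{n+1}f_n=f_n f_{n-1} f_n$, obtained by iterating $f_{n+1}=f_n f_{n-1}$, together with the classical near-commutativity of consecutive Fibonacci words: $f_{n-1}f_n$ and $f_n f_{n-1}$ share a common prefix of length $F(n+1)-2$ and differ only in their last two letters. The first use is to dispose of the ``gap'' positions $F(n)<p<F(n+2)-k$. Writing such a $p$ as $F(n)+i$ with $1\le i\le F(n+1)-k-1$, the factor $f[p;k]$ begins at offset $i$ inside the suffix $f_{n-1}f_n$ of $f_{n+2}$, and the inequality $i+k-1\le F(n+1)-2$ keeps it entirely inside the region where $f_{n-1}f_n$ agrees with $f_n f_{n-1}=f_{n+1}$. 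Hence $f[p;k]=f[i;k]=z_{i-1}^{(k)}$, so no new first occurrence lies in the gap and every gap factor is already captured in the first batch.

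It then suffices to establish pairwise distinctness among the $k+1$ candidates, which splits into (a)~distinctness within the first batch $\{f[i;k]:1\le i\le F(n)\}$, (b)~distinctness within the second batch $\{f[j;k]:F(n+2)-k\le j\le F(n+1)\}$, and (c)~disjointness between the two batches. Claims (a) and (b) I would handle by induction on $n$ via desubstitution under the Fibonacci morphism $h(a)=b,\ h(b)=ba$, which fixes $f$: a coincidence of length $k$ at two positions in the first batch, after desubstitution, produces a coincidence of length roughly $k/\phi$ inside the first batch at parameter $n-1$, bottoming out at a tractable base case. For~(c), extending a hypothetical cross-batch coincidence one letter to the right lands one copy inside the already-resolved gap and the other at a position whose continuation is pinned down by the near-commutativity identity; tracing the two continuations produces letters that disagree, the desired contradiction.

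The gap step is essentially algebraic and should proceed cleanly from the Fibonacci identities and the hypothesis $k<F(n+1)$. The main obstacle I expect is claim~(a): naive period and critical-exponent bounds for factors of $f$ are too weak when $d=j-i$ is close to $F(n)$, so one must carefully track how desubstitution transfers the coincidence to a smaller parameter, or equivalently invoke the fact that $f$ has exactly one right-special factor of each length. Once (a)--(c) are established, Proposition~\ref{p_of_w} forces these $k+1$ candidates to exhaust all distinct length-$k$ factors, and the monotonicity of the first-occurrence positions delivers the stated ordering.
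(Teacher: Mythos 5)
First, a point of reference: the paper does not prove this statement at all. Theorem~\ref{all fact} is imported verbatim from \cite{Chuan:2005} and used as a black box, so there is no in-paper proof to compare you against; what follows assesses your proposal on its own terms. Your overall architecture (pin the count with Proposition~\ref{p_of_w}, dispose of the gap positions, then prove pairwise distinctness of the $k+1$ candidates) is a sound shape for the argument, and one piece of it is genuinely complete: the gap reduction. Writing $f_{n+2}=f_n(f_{n-1}f_n)$ and using the fact that $f_{n-1}f_n$ and $f_nf_{n-1}=f_{n+1}$ agree except in their last two letters, the condition $i+k-1\le F(n+1)-2$ does hold exactly on the gap range $1\le i\le F(n+1)-k-1$, the identification $f[F(n)+i;k]=f[i;k]$ is valid, and $i\le F(n+1)-k-1\le F(n-1)-1<F(n)$ keeps the image inside the first batch. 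The monotonicity of the listed first-occurrence positions is likewise correctly reduced to $k<F(n+1)$.

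The genuine gap is that the heart of the theorem --- pairwise distinctness of the $k+1$ listed words, equivalently the fact that the listed positions really are first occurrences --- is never established. Claims (a) and (b) are delegated to an ``induction via desubstitution'' with no base case, no precise inductive statement, and no treatment of the delicate point (how an occurrence of a length-$k$ factor desubstitutes, up to the usual one-letter ambiguity at its left edge, and why the two desubstituted positions land back in the first batch at parameter $n-1$); you flag this difficulty yourself without resolving it. Claim (c) is in worse shape: ``tracing the two continuations produces letters that disagree'' names no letters and no reason they must differ, and the one-letter rightward extension you propose can carry the second-batch occurrence past position $F(n+2)$, outside the region your structural identity controls. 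Since, given Proposition~\ref{p_of_w}, distinctness of the candidates is essentially equivalent to the whole theorem, the proposal as written reduces the statement to an unproved claim of comparable difficulty. A more mechanical route with the same ingredients would be to invert the logic: extend your gap argument, by induction on the decomposition $f_{m}=f_{m-1}f_{m-2}$, to show that $f[p;k]$ for \emph{every} $p$ coincides with some candidate; then $p_{f_{\infty}}(k)=k+1$ forces the $k+1$ candidates to be pairwise distinct by pigeonhole, and distinctness need not be attacked directly at all.
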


We now state and prove one of the major theorems of this paper, related to the factor complexity of $f_{n}$.

\begin{Theorem}\label{factcomplex1d}
For $n \ge 2$, let $p_{k}(f_n)$ denote the number of distinct subwords of length $k$, $1 \le k \le F(n)$ in $f_n$. Then,
$$p_{k}(f_n)= \begin{cases}
k+1~, & 1 \le k \le F(n-2)\\
F(n-2)+2~, & F(n-2)+1 \le k \le F(n-1)-1\\
F(n)+1 -k~, & F(n-1) \le k \le F(n)
\end{cases}$$
 
\end{Theorem}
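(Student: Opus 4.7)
The plan is to exploit the fact that each $f_n$ is a prefix of the infinite Fibonacci word $f_\infty$ (which follows by induction from $f_{n+1}=f_n f_{n-1}$) and reduce the count of distinct length-$k$ factors of $f_n$ to a counting of first-occurrence positions in $f_\infty$. Indeed, every factor of $f_n$ is a factor of $f_\infty$, and a length-$k$ factor of $f_\infty$ appears in $f_n$ if and only if its earliest occurrence in $f_\infty$ is at some position $p$ with $p\le F(n)-k+1$. Hence $p_k(f_n)$ equals the number of distinct length-$k$ factors of $f_\infty$ whose first-occurrence position lies in $[1,\,F(n)-k+1]$.

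I would then invoke Theorem \ref{all fact}: if $\nu$ denotes the unique integer with $F(\nu)\le k<F(\nu+1)$, the first-occurrence positions of the $k+1$ distinct length-$k$ factors of $f_\infty$ are precisely
\begin{align*}
\mathcal{P}_k=\{1,2,\ldots,F(\nu)\}\,\cup\,\{F(\nu+2)-k,\ldots,F(\nu+1)\}.
\end{align*}
The task now reduces to computing $\lvert\mathcal{P}_k\cap[1,\,F(n)-k+1]\rvert$, which I would handle by the three-way split in the statement.

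For $1\le k\le F(n-2)$ one has $\nu\le n-2$, so $\max\mathcal{P}_k\le F(n-1)$, while $F(n)-k+1\ge F(n-1)+1$; all $k+1$ positions survive, giving $p_k(f_n)=k+1$. For $F(n-2)+1\le k\le F(n-1)-1$ one has $\nu=n-2$: the first block $\{1,\ldots,F(n-2)\}$ is entirely inside $[1,\,F(n)-k+1]$, and the second block $\{F(n)-k,\ldots,F(n-1)\}$ contributes exactly the two positions $F(n)-k$ and $F(n)-k+1$, yielding $F(n-2)+2$. For $F(n-1)\le k\le F(n)$ one has $\nu=n-1$ (with $\nu=n$ in the degenerate case $k=F(n)$); since $F(n)-k+1\le F(n-2)+1<F(n-1)$, only $F(n)-k+1$ positions from the first block qualify, while the second block starts at $F(n+1)-k>F(n)-k+1$ and contributes nothing.

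The only nontrivial ingredient is invoking Theorem \ref{all fact} for the explicit first-occurrence set $\mathcal{P}_k$; once this is in hand, each case collapses to an elementary Fibonacci inequality. The main piece of bookkeeping to watch is verifying in Case 2 that both $F(n)-k$ and $F(n)-k+1$ genuinely lie in $[F(n)-k,\,F(n-1)]$, which follows from the case hypothesis $F(n-2)+1\le k\le F(n-1)-1$, together with a sanity check at the boundary values $k=F(n-2)$ and $k=F(n-1)$ to confirm that the three sub-formulas knit together correctly. I anticipate no deeper conceptual obstacle beyond this careful range analysis.
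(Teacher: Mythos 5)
Your proposal is correct and takes essentially the approach the paper intends: a three-way case split on $k$ using Theorem \ref{all fact} to locate the first occurrences of the $k+1$ distinct length-$k$ factors of $f_\infty$, combined with the fact that $f_n$ is the length-$F(n)$ prefix of $f_\infty$. Since the paper's own argument is compressed to the remark that the ``proof is done in three cases,'' your computation of $\lvert\mathcal{P}_k\cap[1,\,F(n)-k+1]\rvert$ in each range supplies precisely the details that are omitted, and it agrees with the values tabulated for $f_2,\dots,f_6$ (including the empty middle range for $f_4$).
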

Given $f_n$, proof is done in three cases.\\

Table \ref{distfact} lists the number of distinct factors of various lengths that occur in $f_n$ for $n=2,3,4,5,6$. 

\begin{table}[ht]
\caption{Number of distinct factors of various lengths occurring in $f_n$}
\centering
\begin{tabular}{c|ccccccccccccc}
  Factor Length $\rightarrow$ & 1 & 2 & 3 & 4 & 5 & 6 & 7 & 8 & 9 & 10 & 11 & 12 & 13   \\
     $f_n$ and $F(n)$ $\downarrow$& & & & & & & & & & & & &   \\
     \hline
     $f_2$ , $F(2) = 2$ & 2 & 1 &  &  &  &  &  &  &  &  &  &  &     \\
     $f_3$ , $F(3) = 3$ & 2 & 2 & 1 &  &  &  &  &  &  &  &  &    \\
     $f_4$ , $F(4) = 5$ & 2 & 3 & 3& 2 & 1 &  &  &  &  &  &  &  &    \\
     $f_5$ , $F(5) = 8$ & 2 & 3 & 4 & 5 & 4 & 3 & 2 & 1 &  &  &  &  &    \\
     $f_6$ , $F(6) = 13$ & 2 & 3 & 4 & 5 & 6 & 7 & 7 & 6 & 5 & 4 & 3 & 2 & 1   \\
\end{tabular}
\label{distfact}
\end{table}
\begin{Corollary}\label{cor1}
For $n \ge 2$, let $p(f_n)$ denote the number of distinct subwords of $f_n$. Then $p(f_n) = \sum_{k=1}^{F(n)} p_{k}(f_n)$.
\end{Corollary}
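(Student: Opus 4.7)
The plan is to treat this as a straightforward partition/disjoint union argument. Let $\mathcal{S}(f_n)$ denote the set of all distinct non-empty subwords of $f_n$, so that $p(f_n) = |\mathcal{S}(f_n)|$ by definition. For each $k$ with $1 \le k \le F(n) = |f_n|$, let $\mathcal{S}_k(f_n) \subseteq \mathcal{S}(f_n)$ be the subset consisting of distinct subwords of length exactly $k$; then $p_k(f_n) = |\mathcal{S}_k(f_n)|$, again by definition (Definition \ref{subwodcomplexity} adapted to the finite word $f_n$, as used in Theorem \ref{factcomplex1d}).

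First I would note that two words of different lengths are necessarily distinct as elements of $\Sigma^*$, so the family $\{\mathcal{S}_k(f_n)\}_{k=1}^{F(n)}$ is pairwise disjoint. Second, I would observe that every non-empty subword of $f_n$ has length at least $1$ and at most $|f_n| = F(n)$, so
\[
\mathcal{S}(f_n) \;=\; \bigsqcup_{k=1}^{F(n)} \mathcal{S}_k(f_n).
\]
Taking cardinalities and using finite additivity over this disjoint union yields
\[
p(f_n) \;=\; |\mathcal{S}(f_n)| \;=\; \sum_{k=1}^{F(n)} |\mathcal{S}_k(f_n)| \;=\; \sum_{k=1}^{F(n)} p_k(f_n),
\]
which is the stated identity.

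There is essentially no obstacle here; the only point requiring a brief clarifying sentence is the convention on the empty word. In Definition \ref{subwodcomplexity} and Theorem \ref{factcomplex1d} the complexity is indexed starting from length $1$, so the empty word is excluded from $p(f_n)$ on both sides of the equality, and the sum's lower limit $k=1$ is consistent with this. If one preferred the convention that counts the empty factor, both sides would simply be shifted by the same $+1$ and the identity would still hold. Hence the corollary follows immediately from Theorem \ref{factcomplex1d}, which supplies explicit values for each $p_k(f_n)$ and thereby makes the right-hand sum a closed-form quantity.
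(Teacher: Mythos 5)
Your proof is correct and is essentially the same argument as the paper's one-line proof: partitioning the distinct subwords by length and summing cardinalities over the disjoint classes. You simply spell out the disjointness and the length bounds that the paper leaves implicit.
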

\begin{proof}
As $p_{k}(f_n)$ denote the number of distinct subwords of length $k$ in $f_n$, summation over $k$ yields the value of $p(f_n)$.
\end{proof}

\section{Number of Distinct Tandems of \texorpdfstring{$f_{m,n}$}{}}
We use the result obtained in Theorem \ref{factcomplex1d} to count the number of distinct tandems in $f_{m,n}$. 

Recall from Theorem \ref{Dofn} that, For $n \ge 5$, $D(n) = 2(F(n-2)-1)$, where $D(n)$ denotes the exact number of distinct squares in $f_n$. 

\subsection{The Number of Distinct \texorpdfstring{$Type ~I$}{} Tandems in \texorpdfstring{$f_{m,n}$}{}}

\begin{Theorem}\label{2dDn}
For $m \ge 2$ and $n \ge 5$, let $D(m,n;I(a))$ denote the exact number of distinct $Type~ I(a)$ tandems occurring in $f_{m,n}$. Then,
\begin{center}
$D(m,n;I(a)) = D(n)~p(f_m)$.
\end{center} 
\end{Theorem}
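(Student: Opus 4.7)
My plan is to exploit an underlying product structure of $f_{m,n}$. Define $\phi\colon\{a,b\}\times\{a,b\}\to\{a,b,c,d\}$ by $\phi(a,a)=a$, $\phi(a,b)=b$, $\phi(b,a)=c$, $\phi(b,b)=d$, matching the fixed choice $f_{0,0}=a,f_{0,1}=b,f_{1,0}=c,f_{1,1}=d$. First I would prove the identity $f_{m,n}[i,j]=\phi(f_m[i],f_n[j])$ for every $1\le i\le F(m)$, $1\le j\le F(n)$, by induction on $m+n$ using the defining expansions $f_{k,n+1}=f_{k,n}\obar f_{k,n-1}$ and $f_{m+1,k}=f_{m,k}\ominus f_{m-1,k}$. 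The base cases encode the choice of $\phi$, and the inductive step goes through because the two-dimensional recurrence factors into the one-dimensional Fibonacci recurrences, one per coordinate. This identity is essentially the content of Lemma \ref{lemprop}(a)--(d): which of $\Sigma_1,\Sigma_2$ a row uses is controlled by $f_m[i]$, and within that alphabet the row is an encoding of $f_n$, and symmetrically for columns.

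With the product formula in hand, the description of Type I(a) tandems becomes purely one-dimensional. A sub-array $S$ of $f_{m,n}$ of size $(r,2l)$ anchored at $(i_0,j_0)$ satisfies $S=W\obar W$ iff $S[i,j]=S[i,j+l]$ for all valid $i,j$, i.e.\ $\phi(f_m[i_0+i-1],f_n[j_0+j-1])=\phi(f_m[i_0+i-1],f_n[j_0+j-1+l])$; injectivity of $\phi$ in its second coordinate collapses this to $f_n[j_0+j-1]=f_n[j_0+j-1+l]$, which is the condition that $u:=f_n[j_0\ldots j_0+2l-1]$ is a square in $f_n$. Crucially, the vertical factor $v:=f_m[i_0\ldots i_0+r-1]$ plays no role in this condition and can be any factor of $f_m$.

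The conclusion then follows from a bijection $(u,v)\leftrightarrow S$ between pairs (square $u$ in $f_n$, factor $v$ in $f_m$) and distinct Type I(a) tandems of $f_{m,n}$. Given such $(u,v)$ one forms $S$ via $S[i,j]=\phi(v[i],u[j])$; independently locating $v$ in $f_m$ and $u$ in $f_n$ and invoking the product formula identifies $S$ as a sub-array of $f_{m,n}$, while the previous paragraph identifies it as a Type I(a) tandem. Injectivity is immediate from the bijectivity of $\phi$: the first row of $S$ recovers $u$ and its first column recovers $v$. Applying Theorem \ref{Dofn} to get $D(n)$ distinct squares in $f_n$ and Corollary \ref{cor1} to get $p(f_m)$ distinct factors of $f_m$ yields $D(m,n;I(a))=D(n)\cdot p(f_m)$.

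The hardest part will be the rigorous verification of the product formula $f_{m,n}[i,j]=\phi(f_m[i],f_n[j])$: the induction has to track the partial nature of column- and row-concatenation and to handle the degenerate cases from Lemma \ref{lemprop}(e) in which some of $a,b,c,d$ coincide and $\phi$ ceases to be injective; there, distinct pairs $(u,v)$ can in principle collapse onto the same sub-array, so a minor modification of the bijection (e.g.\ restricting to the image of $\phi$, or excluding the coinciding-alphabet cases) is required to preserve the count.
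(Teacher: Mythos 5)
Your proof is correct and, at its core, performs the same count as the paper --- distinct squares of $f_n$ in the horizontal direction times distinct factors of $f_m$ in the vertical direction --- but it packages the argument differently and more rigorously. The paper's proof of Theorem~\ref{2dDn} is a two-line pointer back to Theorem~\ref{t1a} and Lemma~\ref{lemprop}; you instead establish the explicit product identity $f_{m,n}[i,j]=\phi(f_m[i],f_n[j])$ and a bijection between pairs (square of $f_n$, factor of $f_m$) and distinct $Type~I(a)$ tandems. What your version buys is a precise justification of the two steps the paper leaves implicit: that a sub-array is of the form $W\obar W$ exactly when its horizontal projection is a square of $f_n$ (injectivity of $\phi$ in its second argument), and that distinct pairs yield distinct tandems (injectivity of $\phi$ overall), after which Theorem~\ref{Dofn} and Corollary~\ref{cor1} give $D(n)\,p(f_m)$. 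Your closing caveat is genuine and applies equally to the paper's own proof: the setup permits some of $a,b,c,d$ to coincide, and when $\phi$ fails to be injective (e.g.\ $a=c$ and $b=d$, so all rows of $f_{m,n}$ are identical) distinct factors of $f_m$ of the same length collapse onto the same sub-array and $D(n)\,p(f_m)$ overcounts; the theorem implicitly requires the four letters to be pairwise distinct, and in the degenerate cases the count itself changes rather than merely the bijection. The one point both you and the paper pass over silently is the requirement that the root $W$ of a tandem be 2D primitive: horizontally this is harmless because Fibonacci words are fourth-power-free, so every square of $f_n$ has a primitive root, but a vertical factor $v=w^{k}$ of $f_m$ with $k\ge 2$ yields a root $W$ that is not 2D primitive. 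Since Theorem~\ref{t1a} adopts the same convention, this does not put your count at odds with the paper's, but it is worth flagging if the strict definition of tandem is enforced.
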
  
\begin{proof}
The counting process is very similar to the process explained in the proof of Theorem \ref{t1a}, except that, we have to consider only the distinct squares in the first row and the distinct factors of the first column. Hence by Theorem \ref{Dofn}(number of distinct squares in any row) and Corollary \ref{cor1}(number of distinct factors in $f_m$), the result follows. 
\end{proof}

\begin{Theorem}
For $m \ge 5$ and $n \ge 2$, let $D(m,n;I(b))$ denote the exact number of distinct $Type ~I(b)$ tandems occurring in $f_{m,n}$. Then,
\begin{center}
$D(m,n;I(b)) = D(m)~p(f_n)$.
\end{center} 
\end{Theorem}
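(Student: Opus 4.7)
The plan is to mirror the proof of Theorem \ref{2dDn} with rows and columns interchanged, using the symmetry of the Fibonacci array under transposition together with Lemma \ref{lemprop}. A $Type~I(b)$ tandem is a vertical duplication $W \ominus W$, i.e.\ a block stacked on top of an identical block. Just as every row of a $Type~I(a)$ tandem is itself a $Type~I(a)$ tandem, every column of a $Type~I(b)$ tandem is itself a $1$D square, and conversely if every column of a rectangular subarray is a $1$D square of the same length then the subarray is a $Type~I(b)$ tandem.

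First I would identify the ``vertical content.'' By Lemma \ref{lemprop}(c)--(e), each column of $f_{m,n}$ is a $1$D Fibonacci word of length $F(m)$, and all columns are either identical or come in two complementary classes (over $\Sigma_1'$ and $\Sigma_2'$). Because complementation is an alphabet renaming, the set of squares appearing in any single column is, up to relabelling the alphabet, the set of squares of $f_m$; hence the column alignment forces any $Type~I(b)$ tandem to have its vertical extent equal to $2k$ where $2k$ is the length of a square in $f_m$, and there are exactly $D(m)$ such distinct square-roots available in the vertical direction (by Theorem \ref{Dofn}).

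Next I would count the horizontal dimension. For a fixed vertical square of length $2k$ occurring in some column, the $Type~I(b)$ tandem is completely determined by the horizontal strip (i.e.\ the block $W$ of width $c$) built by concatenating consecutive columns. Again by Lemma \ref{lemprop}, any horizontal slice of $f_{m,n}$ of width $c$ is fully encoded by a length-$c$ factor of the horizontal $1$D Fibonacci word $f_n$: once such a factor is fixed, the pattern of columns (each column being one of at most two possible Fibonacci-word columns) is forced. By Corollary \ref{cor1}, the total number of distinct factors of $f_n$ (of all lengths $1 \le c \le F(n)$) is $p(f_n)$. Multiplying the independent choices in the vertical and horizontal directions yields $D(m)\,p(f_n)$ distinct $Type~I(b)$ tandems.

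The only real obstacle is to verify that these two choices are genuinely independent and that distinct pairs (vertical square-root, horizontal factor) really do give distinct tandems $W \ominus W$ in $f_{m,n}$. This is where Lemma \ref{lemprop} does the work: two different vertical squares yield $W$'s of different height, and for a fixed height, two different horizontal factors of $f_n$ produce two non-equal blocks $W$ (since the column contents are determined, up to the fixed column-complementation rule, by the horizontal factor). Once that uniqueness argument is spelled out, the counting $D(m,n;I(b)) = D(m)\,p(f_n)$ follows exactly as in Theorem \ref{2dDn}, and I would simply write: \emph{The proof is analogous to that of Theorem \ref{2dDn}, with the roles of rows and columns interchanged, using Theorem \ref{Dofn} for the number of distinct squares in the Fibonacci word $f_m$ and Corollary \ref{cor1} for the number of distinct factors of $f_n$.}
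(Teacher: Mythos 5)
Your proposal is correct and takes essentially the same route as the paper: the paper's proof is the one-line statement that the argument of Theorem \ref{2dDn} applies with the roles of $m$ and $n$ interchanged, which is exactly the symmetry you spell out (distinct squares $D(m)$ in the column word $f_m$ for the vertical duplication, times the $p(f_n)$ distinct horizontal factors from Corollary \ref{cor1}). Your version just makes the implicit details of that transposition explicit.
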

\begin{proof}
The proof is similar to the proof of Theorem \ref{2dDn} with the roles of $m$ and $n$ interchanged.
\end{proof}
\subsection{The Number of Distinct \texorpdfstring{$Type~ II$}{} Tandems in \texorpdfstring{$f_{m,n}$}{}}
In this section we count the number of distinct $Type~ II$ tandems in $f_{m,n}$. 

\begin{Theorem}
For $m,n \ge 3$, let $D(m),D(n)$ denote the number distinct squares in the Fibonacci words $f_m,f_n$ respectively. Then, there are $D(m)D(n)$ distinct quartics in the Fibonacci array $f_{m,n}$; and hence there are as many distinct $Type~ II(a)$ and $Type~ II(b)$ tandems in $f_{m,n}$.
\end{Theorem}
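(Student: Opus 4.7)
The plan is to mirror the counting argument of Theorem \ref{t2a} but at the level of distinct configurations rather than total occurrences. Recall from that proof that every quartic in $f_{m,n}$ arises as the intersection of a Type~$I(a)$ tandem of size $(F(m),2l)$ with a Type~$I(b)$ tandem of size $(2k,F(n))$, the former being generated by a square of length $2l$ in the first row of $f_{m,n}$ and the latter by a square of length $2k$ in the first column. Since the first row of $f_{m,n}$ is, up to a letter renaming, the $1$D Fibonacci word $f_n$ (and symmetrically for the first column and $f_m$), I would aim for a bijection between distinct quartics in $f_{m,n}$ and pairs consisting of a distinct square of $f_n$ and a distinct square of $f_m$; counting such pairs via Theorem \ref{Dofn} then yields $D(m)D(n)$.

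I would first define the map from such pairs to quartics. Given a distinct square $uu$ of length $2l$ occurring in the first row and a distinct square $vv$ of length $2k$ occurring in the first column, the intersection of the associated Type~$I(a)$ and Type~$I(b)$ tandems is a block of size $(2k,2l)$ that has horizontal period $l$ and vertical period $k$ by the structure statements of Lemma \ref{lemprop}; hence it equals $(W^{2\obar})^{2\ominus}$ for a unique root $W$ of size $(k,l)$, i.e., a quartic. For injectivity, the top row of any quartic with root $W$ is a square $uu$ of length $2l$ sitting in some row of $f_{m,n}$; by Lemma \ref{lemprop}(a)--(b) that row is identical (up to alphabet labelling) to the first row, so $uu$ corresponds to a unique distinct square of $f_n$, and symmetrically the left column yields a unique distinct square $vv$ of $f_m$. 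The crucial reconstruction step is that the entries of $u$ determine the $\Sigma_1'$/$\Sigma_2'$ type of each column of $W$, and Lemma \ref{lemprop}(d) then forces all columns of $W$ of a given type to coincide with the corresponding pieces carved out by $v$, so the pair $(u,v)$ fixes $W$ completely; distinct pairs therefore give distinct quartic contents.

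Combining these, there are $D(m)D(n)$ distinct quartics. Proposition \ref{pro5} ensures that every Type~$II(a)$ tandem in $f_{m,n}$ sits inside exactly one quartic, and every quartic contains exactly one such Type~$II(a)$ sub-configuration, so the count of distinct Type~$II(a)$ tandems also equals $D(m)D(n)$; the $90^\circ$ rotation remark after the definitions of the tandem types reduces Type~$II(b)$ to Type~$II(a)$, giving the same number. The main obstacle I foresee is the injectivity/reconstruction step: a two-dimensional array is not in general determined by a single row together with a single column, so the argument must lean heavily on the rigidity of Lemma \ref{lemprop}. The delicate subcase is (e), where $\Sigma_1=\Sigma_2$ or $\Sigma_1'=\Sigma_2'$ and rows (respectively columns) may be complementary rather than identical; there one must verify that the identical-or-complementary dichotomy still guarantees $(u,v)$ unambiguously recovers $W$, so that two distinct content-pairs cannot accidentally collapse to the same quartic.
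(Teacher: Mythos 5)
Your proposal is correct and follows essentially the same route as the paper: the paper's proof is simply the one-line remark that the argument of Theorem \ref{t2a} (intersecting the $Type~I(a)$ tandems generated by squares of the first row with the $Type~I(b)$ tandems generated by squares of the first column) goes through with distinct squares in place of all occurrences, which is exactly the counting you carry out. Your extra attention to injectivity and to the reconstruction of the root $W$ from the pair $(u,v)$ via Lemma \ref{lemprop} is a welcome elaboration of a step the paper leaves implicit, but it is not a different method.
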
  
\begin{proof}
The proof is similar to Theorem \ref{t2a} except that only distinct squares are considered in the counting.
\end{proof}

\section{The 2D Infinite Fibonacci Word, \texorpdfstring{$f_{\infty,\infty}$}{}}

We know that, a morphism is a map $h: \Sigma^*\rightarrow\Delta^*$, where $\Sigma$,$\Delta$ are alphabets, such that $h(xy) = h(x) h(y)$ for all strings $x,y \in \Sigma^*$. If $\Sigma = \Delta$ we can iterate $h$. That is $h^n(x) =h(h^{n-1}(x)), n\ge 2 $. In the literature, analysis of the Fibonacci language, $\{f_{n}\}_{n \geq 0}$, over the alphabet $\Sigma = \{a,b\}$, is done with the help of the Fibonacci morphism, $h: a \rightarrow b,b \rightarrow ba$. That is to say that, the infinite Fibonacci word $\mathbf{f_\infty} = babbababbabba \cdots$ is the limit of the sequence $\{f_{n}\}_{n \geq 0}$. That is, $$f_{\infty}= \lim_{n\to\infty} h^n(a) =h^{\omega}(a)= babbabab\cdots \cdots$$
is the fixed point of the morphism $h(a)=b, h(b)=ba$ on $\Sigma = \{a,b\}$.

Also, note that, finitely many iterations of the above morphism ($h(a)=b,h(b)=ba$) generates the intermediate finite Fibonacci words, $f_{n}$ for all $n \geq 0$, ultimately leading to $f_{\infty}$, the fixed point of the morphism, when $n \rightarrow \infty$. 
\subsection{\texorpdfstring{$d-$}{}dimensional Morphisms}
Similar to $f_{\infty}$ we define $f_{\infty,\infty}$, the 2D infinite Fibonacci word. It is noted that, the concept of two dimensional iterated morphisms was first introduced in \cite{Arnoux:2004}. But, such morphisms might not always result in  rectangular patterns. Interestingly, in \cite{Charlier:2010}, $d-$dimensional morphisms are introduced and using them $d-$dimensional infinite words are defined. After stating the required definitions and results from \cite{Charlier:2010, Lecomte:2000}, we will define and validate the 2D morphism generating $f_{\infty,\infty}$. As a continuation, in section \ref{DFAOsec} , we construct a DFAO (Deterministic Finite Automaton with Output) which generates $f_{\infty,\infty}$.

\subsection{The 2D Fibonacci Morphism}
Now, we state and prove another important theorem of this paper. Consider the case $d=2$ in \cite{Charlier:2010}.
\begin{Definition}
Let $\mu: \Sigma \rightarrow B_2(\Sigma)$ be a map and $x$ be a 2-dimensional array such that,
\begin{equation}
\label{condn2dim}
    \forall i \in \llbracket 1,2 \rrbracket, \forall k < |x|_i, \forall a,b \in Fact_\textbf{1}(x_{|i,k}) : |\mu(a)|_i=|\mu(b)|_i.
\end{equation} 
Then the image of x by $\mu$ is the 2-dimensional array defined as,
$$\mu(x) = \odot_{0\le n_1 < |x|_1}^{1} \left ( \odot_{0\le n_2 < |x|_2}^{2} \mu(x(n_1,n_2)) \right ). $$
\end{Definition}

If for all $a \in \Sigma$ and all $n \ge 1$, $\mu^{n-1}(a)$ satisfies (\ref{condn2dim}), then $\mu$ is said to be a 2-dimensional morphism. Further, the properties of $\mu$ being prolongable on $a$ and $\mu$ having a fixed point, follow automatically.

Now, consider the map,
\begin{equation}
\label{FibMap}
\mu : ~~d \rightarrow \begin{matrix}
d & c\\
b & a
\end{matrix},~~c \rightarrow \begin{matrix}
d \\
b 
\end{matrix},~~b\rightarrow \begin{matrix}
d & c
\end{matrix},~~~a  \rightarrow 
d.    
\end{equation}
We have written in the order $d,c,b,a$ as $(\mu(d))_\textbf{0} = d$ (i.e. as $\mu$ is prolongable on $d$).

The following result is proved by induction technique.

\begin{Theorem}
Let $f_{0,0} = a,~f_{0,1} = b,~f_{1,0} = c,~f_{1,1} = d$ and $\mu$ be defined as in (\ref{FibMap}). Then for $m,n \ge 1,$ $$\mu(f_{m,n}) = f_{m+1,n+1}.$$ 
\end{Theorem}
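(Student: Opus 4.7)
The plan is to prove the identity by double induction on $(m,n)$, exploiting the recurrences $f_{k,n+1} = f_{k,n} \obar f_{k,n-1}$ and $f_{m+1,k} = f_{m,k} \ominus f_{m-1,k}$ together with the fact that a well-defined 2D morphism distributes over $\obar$ and $\ominus$. In fact it is cleaner to prove the slightly stronger statement $\mu(f_{m,n}) = f_{m+1,n+1}$ for all $m,n \geq 0$, since the base cases $m,n \in \{0,1\}$ are all covered directly by the definition of $\mu$.

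First, I would dispose of the base cases by direct computation from (\ref{FibMap}): $\mu(a) = d = f_{1,1}$, $\mu(b) = \begin{matrix} d & c \end{matrix} = f_{1,2}$, $\mu(c) = \begin{matrix} d \\ b \end{matrix} = f_{2,1}$, and $\mu(d) = \begin{matrix} d & c \\ b & a \end{matrix} = f_{2,2}$, using that $f_{1,2} = f_{1,1} \obar f_{1,0}$, $f_{2,1} = f_{1,1} \ominus f_{0,1}$, and $f_{2,2} = f_{1,2} \ominus f_{0,2}$. Next I would verify that condition (\ref{condn2dim}) is automatically met on every $f_{m,n}$, which is the only point where some care is needed. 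Observe that $\mu(a), \mu(b)$ have height $1$ while $\mu(c), \mu(d)$ have height $2$, and $\mu(a), \mu(c)$ have width $1$ while $\mu(b), \mu(d)$ have width $2$. By Lemma \ref{lemprop}(a,c), every row of $f_{m,n}$ lies over $\{a,b\}$ or $\{c,d\}$, and every column lies over $\{a,c\}$ or $\{b,d\}$; hence within any row all letters produce images of equal height, and within any column all letters produce images of equal width. This is precisely what is required so that $\mu(u \obar v) = \mu(u) \obar \mu(v)$ and $\mu(u \ominus v) = \mu(u) \ominus \mu(v)$ for the Fibonacci arrays at hand.

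With distributivity secured, the inductive step is routine: assuming $\mu(f_{m,n-1}) = f_{m+1,n}$ and $\mu(f_{m,n}) = f_{m+1,n+1}$, one gets
$$\mu(f_{m,n+1}) = \mu(f_{m,n} \obar f_{m,n-1}) = \mu(f_{m,n}) \obar \mu(f_{m,n-1}) = f_{m+1,n+1} \obar f_{m+1,n} = f_{m+1,n+2},$$
and a symmetric computation using $\ominus$ handles the row-wise step $\mu(f_{m+1,n}) = f_{m+2,n+1}$. The main obstacle I anticipate is precisely the compatibility check in the previous paragraph: since $\mu$ sends different letters to rectangles of different dimensions, distributivity of $\mu$ over $\obar$ and $\ominus$ is not automatic and genuinely uses the alphabet-partition structure of Fibonacci arrays supplied by Lemma \ref{lemprop}. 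Once this is in place, the double induction closes without further difficulty.
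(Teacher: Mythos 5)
Your proposal is correct and matches the paper's approach: the paper also establishes $\mu(f_{m,n}) = f_{m+1,n+1}$ by induction on $m$ and $n$ using the recurrences $f_{k,n+1}=f_{k,n}\obar f_{k,n-1}$ and $f_{m+1,k}=f_{m,k}\ominus f_{m-1,k}$ together with the base cases $\mu(a)=f_{1,1}$, $\mu(b)=f_{1,2}$, $\mu(c)=f_{2,1}$, $\mu(d)=f_{2,2}$. Your explicit verification that condition (\ref{condn2dim}) holds on every $f_{m,n}$ via Lemma \ref{lemprop} --- so that $\mu$ genuinely distributes over $\obar$ and $\ominus$ despite sending different letters to blocks of different sizes --- is exactly the point that makes the induction legitimate.
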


\begin{Corollary}
For $n \ge 1,~~\mu(f_{n,n}) = f_{n+1,n+1}.$ 
\end{Corollary}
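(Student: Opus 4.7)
The plan is to observe that this corollary is the diagonal specialization of the immediately preceding theorem, which asserts $\mu(f_{m,n}) = f_{m+1,n+1}$ for all $m,n \ge 1$. Setting $m = n$ in that identity gives exactly $\mu(f_{n,n}) = f_{n+1,n+1}$ for every $n \ge 1$, so no additional combinatorial work is needed beyond invoking the theorem.

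In more detail, I would first note that the hypothesis $n \ge 1$ in the corollary matches the hypothesis $m,n \ge 1$ of the theorem when $m$ is taken equal to $n$, so the theorem applies without any boundary case to check. I would then simply substitute $m = n$ into the statement $\mu(f_{m,n}) = f_{m+1,n+1}$ and record the resulting equality. Since $f_{n,n}$ is a well-defined square Fibonacci array for every $n \ge 1$ (by the recursive definition in Section~\ref{sect3}), the substitution is legitimate and the conclusion follows.

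Because the corollary is obtained by a direct substitution, there is no genuine obstacle in the proof; the only thing worth flagging is that one should verify the morphism $\mu$ is well-defined on $f_{n,n}$, i.e., that $f_{n,n}$ satisfies the compatibility condition (\ref{condn2dim}) needed for $\mu$ to act on it. This is already guaranteed by the theorem (otherwise $\mu(f_{n,n})$ would not be meaningful on the left-hand side there either), so nothing extra has to be established. I would therefore write the proof as a single sentence: take $m = n$ in the preceding theorem.
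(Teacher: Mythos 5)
Your proof is correct and matches the paper's own argument exactly: the paper also proves the corollary by substituting $m=n$ in the preceding theorem. The extra remark about well-definedness of $\mu$ on $f_{n,n}$ is harmless but not needed.
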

\begin{proof}
By taking $m=n$ in the theorem, the corollary follows.
\end{proof}

\begin{Corollary}
$w = \mu^{\omega}(d)$ exists. 
\end{Corollary}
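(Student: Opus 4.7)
The plan is to combine the preceding theorem $\mu(f_{m,n}) = f_{m+1,n+1}$ with the general framework of $d$-dimensional morphisms from \cite{Charlier:2010} to identify the fixed point. First I would verify that $\mu$ is prolongable on $d$ in the two-dimensional sense: by direct inspection of (\ref{FibMap}), $\mu(d)$ is the $2\times 2$ array whose top-left entry is again $d$, and under the convention $f_{0,0}=a,\ f_{0,1}=b,\ f_{1,0}=c,\ f_{1,1}=d$ it coincides with $f_{2,2}$. A trivial induction using the theorem then gives $\mu^{k}(d) = \mu^{k-1}(f_{2,2}) = f_{k+1,k+1}$ for every $k\ge 1$, so the iterates of $\mu$ on $d$ are exactly the square Fibonacci arrays $f_{n,n}$.

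The key content is then to show that the iterates form a nested sequence of 2D prefixes, so that the limit makes sense as an infinite array. Applying first the row-wise expansion and then the column-wise expansion to $f_{k+2,k+2}$, I would write
\[
f_{k+2,k+2} \;=\; f_{k+2,k+1}\,\obar\, f_{k+2,k} \;=\; \bigl(f_{k+1,k+1}\,\ominus\, f_{k,k+1}\bigr)\,\obar\, f_{k+2,k},
\]
which is precisely of the form $(f_{k+1,k+1}\ominus x)\obar y$ demanded by the 2D prefix definition, so $f_{k+1,k+1}\leq_{p}^{2d} f_{k+2,k+2}$ for every $k\ge 1$. Consequently the sequence $\bigl(\mu^{k}(d)\bigr)_{k\ge 1} = (f_{k+1,k+1})_{k\ge 1}$ is a nested chain of 2D prefixes whose dimensions both grow to infinity, and therefore determines a well-defined map $\mathbb{N}\times\mathbb{N}\to\Sigma$, which is the infinite array $w=\mu^{\omega}(d)$.

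The main obstacle is the prefix-nesting step, because in two dimensions both sides of an array may grow under iteration and one has to use the row and column recurrences in the right order to isolate $f_{k+1,k+1}$ as the top-left block of $f_{k+2,k+2}$; once that identification is made, the remaining content is a routine appeal to the existence theorem for fixed points of prolongable $d$-dimensional morphisms in \cite{Charlier:2010}. The prolongability hypothesis itself is immediate from the chosen ordering $d,c,b,a$ in the definition of $\mu$, which is why the authors emphasized this ordering when introducing the map.
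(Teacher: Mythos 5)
Your proposal is correct and follows essentially the same route as the paper: both identify $\mu^{k}(d)=f_{k+1,k+1}$ via the preceding theorem $\mu(f_{m,n})=f_{m+1,n+1}$ and invoke prolongability of $\mu$ on $d$ (i.e.\ $(\mu(d))_{\mathbf{0}}=d$) to conclude that the limit exists. The only difference is that you make the prefix-nesting $f_{k+1,k+1}\leq_{p}^{2d} f_{k+2,k+2}$ explicit, whereas the paper delegates that step to the general fixed-point framework of \cite{Charlier:2010}.
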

\begin{proof}
Since, $\mu(f_{n,n}) = f_{n+1,n+1}$, $\mu^n(d)$ is inductively well defined from $\mu^{n-1}(d)$ and hence $\mu$ is a 2D morphism. Now, since $(\mu(d))_\textbf{0} = d$, $\mu$ is prolongable on $d$, and $$w = \lim_{n \rightarrow +\infty} \mu^n(d) = \mu^{\omega}(d)$$ exists. This fixed point $w$ is called the infinite 2D Fibonacci word and is denoted by $f_{\infty,\infty}$.
\end{proof}

First few iterations of $\mu$ on $d$ are shown below.
$$d \rightarrow \begin{matrix}
d & c\\
b & a
\end{matrix} \rightarrow \begin{matrix}
d & c & d\\
b & a & b\\
d & c & d
\end{matrix} \rightarrow \begin{matrix}
d & c & d & d & c \\
b & a & b & b & a \\
d & c & d & d & c \\
d & c & d & d & c \\
b & a & b & b & a \\
\end{matrix} \rightarrow \begin{matrix}
d & c & d & d & c & d & c & d &\cdots\\
b & a & b & b & a & b & a & b & \cdots\\
d & c & d & d & c & d & c & d &\cdots\\
d & c & d & d & c & d & c & d &\cdots\\
b & a & b & b & a & b & a & b &\cdots\\
d & c & d & d & c & d & c & d &\cdots\\
b & a & b & b & a & b & a & b & \cdots\\
d & c & d & d & c & d & c & d &\cdots\\
\vdots & \vdots & \vdots &\vdots & \vdots & \vdots & \vdots & \vdots & \ddots
\end{matrix}$$

\subsection{2D Finite Fibonacci Words as Morphic Words} 
 As $\mu(f_{m,n}) = f_{m+1,n+1}$, it is interesting to note that, the morphism $\mu$ behaves like a shift operator. Hence, iterated applications of $\mu$ on $f_{1,n'}$ or $f_{m',1}$  with appropriate $m',n'$ values, can generate any finite 2D Fibonacci word.
\begin{Corollary}
Let $m,n \ge 2$ and $m \ne n$. Then, we have 
$$f_{m,n} = 
\left\{
	\begin{array}{ll}
		\mu^{(m-1)}(f_{1,n-m+1})  & \mbox{if } m < n \\
		 \\
		\mu^{(n-1)}(f_{m-n+1,1})  & \mbox{if } m > n 
	\end{array}
\right.$$
\end{Corollary}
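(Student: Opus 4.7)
The plan is to derive this corollary directly from the theorem $\mu(f_{m,n}) = f_{m+1,n+1}$ (valid for $m,n \ge 1$) by a straightforward induction on the number of applications of $\mu$. There is no genuine obstacle here; the corollary is essentially an index-shift identity.

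\textbf{Case $m < n$.} I would start from $f_{1,\,n-m+1}$. Since $m \ge 2$ and $m < n$, we have $n - m + 1 \ge 2 \ge 1$, so the base object $f_{1,n-m+1}$ is well defined and both its indices satisfy the hypothesis of the theorem. Applying the theorem once gives
\[
\mu(f_{1,\,n-m+1}) = f_{2,\,n-m+2}.
\]
Inductively, assume $\mu^{k}(f_{1,n-m+1}) = f_{1+k,\,n-m+1+k}$ for some $0 \le k \le m-2$; since $1+k \ge 1$ and $n-m+1+k \ge 1$, the theorem applies and gives
\[
\mu^{k+1}(f_{1,n-m+1}) = \mu(f_{1+k,\,n-m+1+k}) = f_{2+k,\,n-m+2+k}.
\]
Setting $k = m-1$ yields $\mu^{m-1}(f_{1,n-m+1}) = f_{m,\,n}$, as required.

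\textbf{Case $m > n$.} By the symmetric argument, start from $f_{m-n+1,\,1}$, which is well defined since $m - n + 1 \ge 2$. The same induction, iterating the theorem $n-1$ times, gives
\[
\mu^{n-1}(f_{m-n+1,\,1}) = f_{m-n+n,\,n} = f_{m,n}.
\]

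The only mild subtlety is checking that every intermediate array encountered has both indices at least $1$, so that the hypothesis of the theorem $\mu(f_{m,n}) = f_{m+1,n+1}$ is available at each inductive step; this is immediate from the bounds above. Consequently, the two cases of the piecewise formula follow, and the corollary is proved.
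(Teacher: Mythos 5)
Your proof is correct and follows essentially the same route as the paper: both iterate the identity $\mu(f_{m,n}) = f_{m+1,n+1}$ to get $\mu^{k}(f_{m,n}) = f_{m+k,n+k}$ and then substitute the starting arrays $f_{1,n-m+1}$ and $f_{m-n+1,1}$. Your version merely makes the induction and the index-validity checks explicit, which the paper leaves implicit.
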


\begin{proof}

We know that the 2D Fibonacci words $f_{1,n'}$ and $f_{m',1}$ for any $m',n' \ge 2$ can be easily obtained, as they are literally 1D Fibonacci words over $\{d,c\}$ and $\{d,b\}$ respectively.

Now, since $\mu(f_{m,n}) = f_{m+1,n+1}$,~ for  $k \ge 1,~ \mu^k(f_{m,n}) = f_{m+k,n+k}$.

Therefore, 
$$	\begin{array}{ll}
		~~\mbox{if }~ m < n,~~~ & \mu^{(m-1)}(f_{1,n-m+1}) = f_{1+m-1,n-m+1+m-1} = f_{m,n}~~~~\mbox{and }\\
		 \\
		~~\mbox{if }~ m > n,~~~  &  \mu^{(n-1)}(f_{m-n+1,1}) = f_{m-n+1+n-1,1+n-1} = f_{m,n}. 
	\end{array}
$$
\end{proof}

\begin{Example}
Suppose we want to generate $f_{3,5}$. 

Since $n>m$~~, $n-m \ge 1$ and we start with $f_{1,n-m+1} = f_{1,3} = d~c~d$, so that, $\mu^{2}(f_{1,3})$ will be $f_{3,5}$.
\begin{align*}
  \mu^{2}(f_{1,3}) = \mu^{2}(d~c~d) &= \mu \left( \begin{matrix}
d & c & d & d & c\\
 b & a & b &  b & a 
 \end{matrix} \right)  \\
 &= \begin{matrix}
d & c & d & d & c & d & c & d \\
b & a & b & b & a & b & a & b \\
d & c & d & d & c & d & c & d 
\end{matrix} = f_{3,5}.   
\end{align*}

\end{Example}

\section{Factor Complexities of \texorpdfstring{$f_{\infty,\infty}$}{} and \texorpdfstring{$f_{m,n}$}{}}
In this section we find the number of distinct subwords of any given $f_{m,n}$ and also of $f_{\infty,\infty}$.  The result proved in Theorem \ref{factcomplex1d}, will be used to find the factor complexity of  $f_{m,n}$.   We denote by $p_{k,l}(u)$, the complexity function of the two dimensional word $u$. It is understood through the subscripts $k$ and $l$ that $u$ is a two dimensional word. 

\begin{Proposition}\label{2dfactorcountprop}
For $u$, a 2D word over $\Sigma$, let $p_{k,l}(u), ~k,l \ge 1$ denote the number of subwords (subarrays) of $u$ of size $(k,l)$. Then, for the two dimensional infinite Fibonacci word, $f_{\infty,\infty}$, $p_{k,l}(f_{\infty,\infty}) = (k+1)(l+1)$.
\end{Proposition}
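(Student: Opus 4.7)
The plan is to reduce the two-dimensional counting problem to a product of two one-dimensional counting problems by showing that each entry of $f_{\infty,\infty}$ is a bijective function of the corresponding entries in the first row and the first column.

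Let $r = r_{1} r_{2} r_{3} \cdots$ denote the first row of $f_{\infty,\infty}$ and $c = c_{1} c_{2} c_{3} \cdots$ its first column. From the morphism $\mu$ applied column-wise (the letters $d, b$ go to two-row blocks while $c, a$ go to one-row blocks) one sees that $r$ is a 1D infinite Fibonacci word over $\{d,c\}$; symmetrically $c$ is a 1D infinite Fibonacci word over $\{d,b\}$. Both are images of $f_{\infty}$ under a letter renaming, so Proposition \ref{p_of_w} yields $p_{r}(l) = l+1$ and $p_{c}(k) = k+1$. Next, define the bijection
\[
\phi : \{d,b\} \times \{d,c\} \to \{a,b,c,d\}, \quad \phi(d,d)=d,\ \phi(d,c)=c,\ \phi(b,d)=b,\ \phi(b,c)=a,
\]
and establish the pointwise identity $f_{\infty,\infty}(i,j) = \phi(c_{i}, r_{j})$ for all $i,j \ge 1$. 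This is where Lemma \ref{lemprop} enters: applied to any finite prefix $f_{n,n}$ of $f_{\infty,\infty}$ that contains the cell $(i,j)$, it asserts that every row over $\{d,c\}$ coincides with $r$ and every row over $\{b,a\}$ coincides with the letter-renamed copy $\sigma(r)$, where $\sigma : d \mapsto b,\ c \mapsto a$; furthermore the $i$-th row sits in $\{d,c\}$ exactly when $c_{i} = d$. These four cases unpack precisely to $\phi(c_{i}, r_{j})$.

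With this identity available, the $(k,l)$-subarray of $f_{\infty,\infty}$ at position $(i,j)$ is $\bigl[\phi(c_{i+p-1},\, r_{j+q-1})\bigr]_{1 \le p \le k,\ 1 \le q \le l}$, and since $\phi$ is injective this subarray determines, and is determined by, the ordered pair consisting of the length-$k$ factor of $c$ starting at $i$ together with the length-$l$ factor of $r$ starting at $j$. Consequently ``subarray $\mapsto$ (column marginal, row marginal)'' is a bijection from distinct $(k,l)$-subarrays of $f_{\infty,\infty}$ onto the Cartesian product of the length-$k$ factor set of $c$ with the length-$l$ factor set of $r$. Multiplying the two factor counts from Proposition \ref{p_of_w} delivers $p_{k,l}(f_{\infty,\infty}) = (k+1)(l+1)$.

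The main obstacle is the pointwise identity $f_{\infty,\infty}(i,j) = \phi(c_{i}, r_{j})$. Lemma \ref{lemprop} is formulated for finite arrays, so one must argue that the classification of each row as either a copy of $r$ or a copy of $\sigma(r)$ is coherent across every finite window and therefore passes to the infinite limit. This coherence follows from $f_{n,n}$ being a prefix of $f_{n+1,n+1}$ (guaranteed by $\mu$ being prolongable on $d$ and the resulting equality $\mu(f_{n,n}) = f_{n+1,n+1}$ already proved), so that the row/column labels of any fixed cell stabilise as $n$ grows.
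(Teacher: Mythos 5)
Your proposal is correct and follows essentially the same route as the paper: reduce the count to the product of the 1D factor complexities of a row and a column of $f_{\infty,\infty}$ (both renamed copies of $f_{\infty}$, so each contributes $n+1$ factors via Proposition \ref{p_of_w}), using Lemma \ref{lemprop} to see that the whole array is determined by its first row and first column. Your version is somewhat more rigorous than the paper's, since the explicit pointwise identity $f_{\infty,\infty}(i,j)=\phi(c_i,r_j)$ with $\phi$ a bijection makes the correspondence between $(k,l)$-subarrays and pairs of marginal factors a genuine bijection, where the paper instead relies on an informal pairing of vertical and horizontal factors sharing a common $(1,1)$ prefix.
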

\begin{proof}
The result is immediate through a simple combinatorial argument. By lemma \ref{lemprop}, every row (every column) of $f_{\infty,\infty}$ written as a 1D word is an 1D infinite Fibonacci word $f_{\infty}$ over any one of the two letter alphabets $\{d,c\},\{b,a\} (\{d,b\},\{c,a\})$. By proposition \ref{p_of_w}, there are $k+1$ distinct subwords of length $k$ in every column (we call them, vertical factors) and $l+1$  distinct subwords of length $l$ in every row (we call them, horizontal factors). Since there are only two distinct columns (one over $\{d,b\}$ and one over $\{c,a\}$) in $f_{\infty,\infty}$, there are $2(k+1)$ vertical factors of length $k$. Similarly, there are $2(l+1)$ horizontal factors of length $l$. By pairing the vertical(horizontal) factors located in the same row(column), we get $k+1$ ( l+1 ) pairs of vertical(horizontal) factors. Now, note that a 2D subword of $f_{\infty,\infty}$ is formed by the process similar to the one explained in the proof of Theorem 2 from \cite{Mahalingam:2018}. That is, a vertical factor will produce a 2D factor through a horizontal factor, if both have the same prefix of size $(1,1)$. Now, as one factor in a pair of vertical factors shares a common prefix of size $(1,1)$ with one factor in a pair of horizontal factors, there will be $(k+1)(l+1)$ distinct factors of size $(k,l)$ in $f_{\infty,\infty}$. 
\end{proof}

To understand the process, in Example \ref{2dfactorexample} we list the factors of size $(2,3)$ in $f_{\infty,\infty}$.
\begin{Example} \label{2dfactorexample}
Consider $f_{\infty,\infty}$. 

\vspace{0.5cm}
$\begin{tabular}{|c|| c| c| c| c|}
\hline
 Horizontal & & & & \\
 Factors of & dcd & cdd & ddc & cdc \\
  length $3$ $\rightarrow$  & (or) &  (or) & (or) & (or)  \\
 $ Vertical $ & bab  &  abb  &  bba  &  aba    \\
  Factors of  &   &   &   &       \\
   length 2 $\downarrow$ &   &   &   &       \\
  \hline
  \hline

$
 \begin{tabular}{c}
      d \\
      b 
 \end{tabular} (or)  \begin{tabular}{c}
      c\\
      a 
 \end{tabular} $ 
 &  $\begin{tabular}{|ccc|}

\hline
d&c&d \\
 b&a&b\\
\hline
\end{tabular}$ &   $\begin{tabular}{|ccc|}

\hline
 c&d&d \\
 a&b&b\\
\hline
\end{tabular}$ &  $\begin{tabular}{|ccc|}

\hline
 d&d&c \\
 b&b&a\\
\hline
\end{tabular}$ &  $\begin{tabular}{|ccc|}

\hline
 c&d&c \\
 a&b&a\\
\hline
\end{tabular}$ \\

\hline

$
 \begin{tabular}{c}
      b \\
      d 
 \end{tabular}  (or) \begin{tabular}{c}
      a\\
      c 
 \end{tabular}$ &  $\begin{tabular}{|ccc|}

\hline
b&a&b\\
d&c&d \\
 
\hline
\end{tabular}$ &   $\begin{tabular}{|ccc|}

\hline
 a&b&b\\
 c&d&d \\

\hline
\end{tabular}$ &  $\begin{tabular}{|ccc|}

\hline
b&b&a\\
d&d&c \\
 
\hline
\end{tabular}$ &  $\begin{tabular}{|ccc|}

\hline
 a&b&a\\
 c&d&c \\

\hline
\end{tabular}$ \\
 \hline

$
 \begin{tabular}{c}
      d \\
      d 
 \end{tabular} (or) \begin{tabular}{c}
      c\\
      c 
 \end{tabular}$ &  $\begin{tabular}{|ccc|}

\hline
d&c&d \\
d&c&d \\
 
\hline
\end{tabular}$ &   $\begin{tabular}{|ccc|}

\hline
c&d&d \\
 c&d&d \\

\hline
\end{tabular}$ &  $\begin{tabular}{|ccc|}

\hline
d&d&c \\
d&d&c \\
 
\hline
\end{tabular}$ &  $\begin{tabular}{|ccc|}

\hline
c&d&c \\
 c&d&c \\

\hline
\end{tabular}$ \\
 \hline
 \end{tabular}$
 
\vspace{0.5cm}

We have $3 \times 4 = 12$ factors of size $(2,3)$.
\end{Example}

The count, carried out in proposition \ref{2dfactorcountprop} can be restricted to any finite Fibonacci word $f_{m,n}$. Note that, while counting factors of size $(k,l)$ in a given $f_{m,n}$, contrary to the availability of all $(k+1)$ vertical factors and $(l+1)$ horizontal factors in $f_{\infty,\infty}$, not all will be available in $f_{m,n}$. This is due to the finite nature of $f_{m,n}$. But, by theorem \ref{factcomplex1d}, we know the exact number of horizontal and vertical factors available in any given $f_{m,n}$. Hence we have the following result.

\begin{Proposition}
Given a finite Fibonacci array, $f_{m,n}$, the number of factors of size $(k,l)$ in it are, 
$$ p_{k,l}(f_{m,n}) = p_k(f_m) p_l(f_n).$$
\end{Proposition}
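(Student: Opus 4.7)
The plan is to mirror the pairing argument used in the proof of Proposition \ref{2dfactorcountprop}, but to replace the infinite-word counts $(k+1)$ and $(l+1)$ by the finite-word counts supplied by Theorem \ref{factcomplex1d}. First I would invoke Lemma \ref{lemprop}: every row of $f_{m,n}$, read as a one-dimensional word, is a copy of the finite Fibonacci word $f_n$ over either $\{a,b\}$ or $\{c,d\}$, and every column is a copy of $f_m$ over either $\{a,c\}$ or $\{b,d\}$. Consequently, the distinct horizontal factors of length $l$ appearing anywhere in $f_{m,n}$ are in bijection with the distinct length-$l$ factors of $f_n$, of which there are exactly $p_l(f_n)$ per row-alphabet, and similarly each column-alphabet contributes $p_k(f_m)$ distinct vertical factors of length $k$.

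Next I would carry out the two-alphabet pairing exactly as in Proposition \ref{2dfactorcountprop}: a sub-array of size $(k,l)$ is uniquely specified by the pair consisting of its top row (a horizontal factor of length $l$) and its leftmost column (a vertical factor of length $k$), constrained to agree in the top-left corner letter. Grouping the admissible pairs by the alphabet of the top row, each choice of row-alphabet forces the column-alphabet of the leftmost column, after which the $p_l(f_n)$ horizontal factors pair bijectively with the $p_k(f_m)$ vertical factors via the corner-letter matching, and summing the two grouping cases telescopes to $p_k(f_m)\,p_l(f_n)$ distinct 2D factors — exactly the asserted formula. The same realizability reasoning that was used for $f_{\infty,\infty}$ (each valid horizontal/vertical pair is actually witnessed inside the array, by the block-identity properties (b) and (d) of Lemma \ref{lemprop}) applies without change inside $f_{m,n}$, since the horizontal and vertical factor counts tabulated in Theorem \ref{factcomplex1d} are themselves computed from $f_n$ and $f_m$.

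The main obstacle I anticipate is verifying that \emph{no} legitimate pair is lost due to the finiteness of $f_{m,n}$: in the infinite case one can always slide a window to witness any desired pair simultaneously, but in $f_{m,n}$ one must check that whenever a horizontal factor of length $l$ appears in some row and a vertical factor of length $k$ appears in some column with matching corner, there is an actual position $(i,j)$ in the finite array where both occur as the top edge and left edge of a common sub-array. I would dispatch this by noting that the $p_l(f_n)$ horizontal factors exhaust all length-$l$ subwords that occur anywhere in a row of $f_{m,n}$ and similarly for vertical factors; combined with the fact that row-types and column-types are themselves determined by the 1D Fibonacci pattern of alphabets, the pair-to-sub-array assignment is both well-defined and surjective, which closes the count and yields $p_{k,l}(f_{m,n}) = p_k(f_m)\,p_l(f_n)$.
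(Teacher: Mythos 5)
Your proposal is correct and follows essentially the same route as the paper: the paper's own proof is precisely the one-line instruction to repeat the pairing argument of Proposition \ref{2dfactorcountprop} with the infinite-word counts $(k+1)$ and $(l+1)$ replaced by the finite counts $p_k(f_m)$ and $p_l(f_n)$ from Theorem \ref{factcomplex1d}. Your extra care about realizability of every admissible horizontal/vertical pair inside the finite array (via the identical-rows and identical-columns properties of Lemma \ref{lemprop}) is a welcome elaboration of a point the paper leaves implicit, but it is not a departure in method.
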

\begin{proof}
Using the count derived in theorem \ref{factcomplex1d}, one can show the equality in identical lines with the proof of proposition \ref{2dfactorcountprop}.
\end{proof}

In Example \ref{2dfactorexamplefinite} also, we count the factors of size $(2,3)$, but in $f_{3,4}$. We can observe the non availability of some horizontal, vertical factors due to the finite nature of $f_{3,4}$.

\begin{Example} \label{2dfactorexamplefinite}
Consider $f_{3,4} = \begin{tabular}{|c c c c c|}
\hline
 d & c & d & d & c  \\
 b & a & b & b & a   \\
 d & c & d & d & c   \\
\hline
\end{tabular}$. 

\vspace{0.5cm}
$\begin{tabular}{|c|| c| c| c|}
\hline
 Horizontal & dcd & cdd & ddc  \\
 Factors of length $3$ $\rightarrow$  & (or) &  (or) & (or)   \\
 $ Vertical $ & bab  &  abb  &  bba      \\
  Factors of length 2 $\downarrow$ &   &   &       \\
  \hline
  \hline
  &&&\\
  
$\begin{tabular}{ccc}
 \begin{tabular}{c}
      d \\
      b 
 \end{tabular} & (or) & \begin{tabular}{c}
      c\\
      a 
 \end{tabular}  \\
\end{tabular}$ &  $\begin{tabular}{|ccc|}

\hline
d&c&d \\
 b&a&b\\
\hline
\end{tabular}$ &   $\begin{tabular}{|ccc|}

\hline
 c&d&d \\
 a&b&b\\
\hline
\end{tabular}$ &  $\begin{tabular}{|ccc|}

\hline
 d&d&c \\
 b&b&a\\
\hline
\end{tabular}$ \\
&&&\\
 \hline
 
  &&&\\
  
$\begin{tabular}{ccc}
 \begin{tabular}{c}
      b \\
      d 
 \end{tabular} & (or) & \begin{tabular}{c}
      a\\
      c 
 \end{tabular}  \\
\end{tabular}$ &  $\begin{tabular}{|ccc|}

\hline
b&a&b\\
d&c&d \\
 
\hline
\end{tabular}$ &   $\begin{tabular}{|ccc|}

\hline
 a&b&b\\
 c&d&d \\

\hline
\end{tabular}$ &  $\begin{tabular}{|ccc|}

\hline
b&b&a\\
d&d&c \\
 
\hline
\end{tabular}$  \\
&&&\\
 \hline
  \end{tabular}$
  
\vspace{0.5cm}
We have, $p_2(f_3) \times p_3(f_4) = 2 \times 3 = 6$ factors of size $(2,3)$.
\end{Example}

\section{2D Fibonacci Words as \texorpdfstring{$S-automatic$}{} Words}\label{DFAOsec}
In this section we construct a DFAO which generates $f_{\infty,\infty}$.

To construct the DFAO, first we construct $|\Sigma|$ number of automata, one for each letter of $\Sigma$, as outlined \cite{Charlier:2010}.These automata will be integrated to get the required DFAO.
\begin{Definition}\cite{Charlier:2010} \label{mnthsym2DS}
For each d-dimensional morphism $\mu: \Sigma \rightarrow B_d{(\Sigma)}$ and for each letter $a \in \Sigma$, define a DFA $\mathscr{A}_{\mu,a}$ over the
alphabet $\{0,1,\ldots,r_{\mu}-1\}^d$ where $r_\mu := max\{|\mu(b)|_i ~~| ~b \in \Sigma, i =1,\ldots,d\}$. The set of states is $\Sigma$, the initial state is $a$ and all states are final. The (partial) transition function is defined by
$$\delta_{\mu}(b,\textbf{n}) = (\mu(b))_{\textbf{n}},~~~ \forall b \in \Sigma ~\text{and}~ \textbf{n} \le |\mu(b)|.$$
\end{Definition}
This automaton will be such that, for all $m,n \ge 0$,
$$ y_{m,n} = \delta_{\mu}(a,(rep_S(m),rep_S(n))^0),$$
where we have padded the shortest word with enough $0$s to make the length of the two words the same. If we consider the coding $v: \Sigma^* \rightarrow \Gamma^*$, as the output function, the corresponding DFAO generates $x$ as an S-automatic sequence. 

The procedure is outlined below.
\begin{itemize}
    \item[Step 1] From the first rows of the morphism $\mu$, derive the one dimensional morphism $\mu_1: \Sigma_1 \rightarrow \Sigma_1^*$ , $\Sigma_1 \subseteq \Sigma$ , which is prolongable on $a$ (This is the restricted morphism along the first direction) 
    \item[Step 2]  Construct the automaton $\mathscr{A}_{\mu_1,a}$ and obtain the directive language  $L{\mu_1,a}$ 
    
    ( Alternatively, we can consider the first columns of the morphism $\mu$ to derive the one dimensional morphism $\mu_2: \Sigma_2 \rightarrow \Sigma_2^*$ , $\Sigma_2 \subseteq \Sigma$, which is also prolongable on $a$. Note that, this is the restricted morphism along the second direction. Then by constructing the automaton $\mathscr{A}_{\mu_2,a}$ we can obtain the directive language  $L{\mu_2,a}$. But since $Shape_{\mu_1}(x) = Shape_{\mu_2}(y)$, where x = $\mu_1^{\omega}(a)$ and $y = \mu_2^{\omega}(a)$, the languages $L_{\mu_1,a}$ and $L_{\mu_2,a}$ will be equal \cite{Charlier:2010})
    \item[Step 3] For each letter $a \in \Sigma$, define a DFA $\mathscr{A}_{\mu,a}$ as described in the Definition \ref{mnthsym2DS}. The DFAO $\mathscr{A}_{\mu}$ is constructed by combining (superimposing) these DFAs. 
     \item[Step 4] For an input $(rep_S(m),rep_S(n))^0$, the output of $\mathscr{A}_{\mu}$ is the symbol $s \in \Sigma$ in the accepting state which will be written at $x_{m,n}$, the $(m,n)^{th}$ entry in the 2D infinite word. 
\end{itemize}

\subsection{DFAO Generating \texorpdfstring{$f_{\infty, \infty}$}{} as an S-automatic word}
As explained earlier, we have the two unidimensional morphisms derived from (2).
Let $\Sigma_1 = \{d,c\}$ and $\Sigma_2 = \{d,b\}$. Then,

$\mu_1: \Sigma_1 \rightarrow \Sigma_1^* ~~:= d \rightarrow d c,~~ c \rightarrow d$ ~~~and ~~$\mu_2: \Sigma_2 \rightarrow \Sigma_2^* ~~:= d \rightarrow d b,~~ b \rightarrow d$.

The automaton $\mathscr{A}_{\mu_1,d}$ will be as in Figure \ref{2Dauto}. And the directive language will be 
$$L_{\mu_1,d} =\{\epsilon, 1, 10, 100, 101, 1000, 1001, 1010, \dots \}.$$

\begin{figure}[ht]
    \centering
    \includegraphics[scale=0.35]{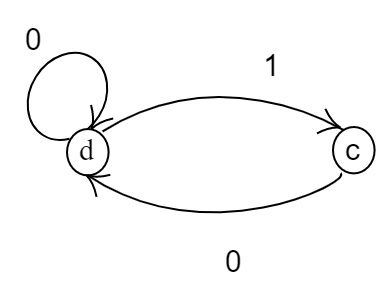}
    \caption{Automaton $\mathscr{A}_{\mu_1,d}$}
    \label{2Dauto}
\end{figure}

Note that $r_\mu := max\{|\mu(b)|_i ~~| ~b \in \Sigma, i =1,2\} = 2$. Hence, for each letter $a \in \Sigma$, we define a DFA, $\mathscr{A}_{\mu,a}$ over the alphabet $\{0,1\}^2 = \{(0,0),(0,1),(1,0),(1,1)\}$. All the four automata are given in Figure \ref{autoall}. We combine all the four automata to get the required automaton, $\mathscr{A}_{\mu}$ , given in Figure \ref{auto}.

\begin{figure}
    \centering
    \includegraphics[scale=0.4]{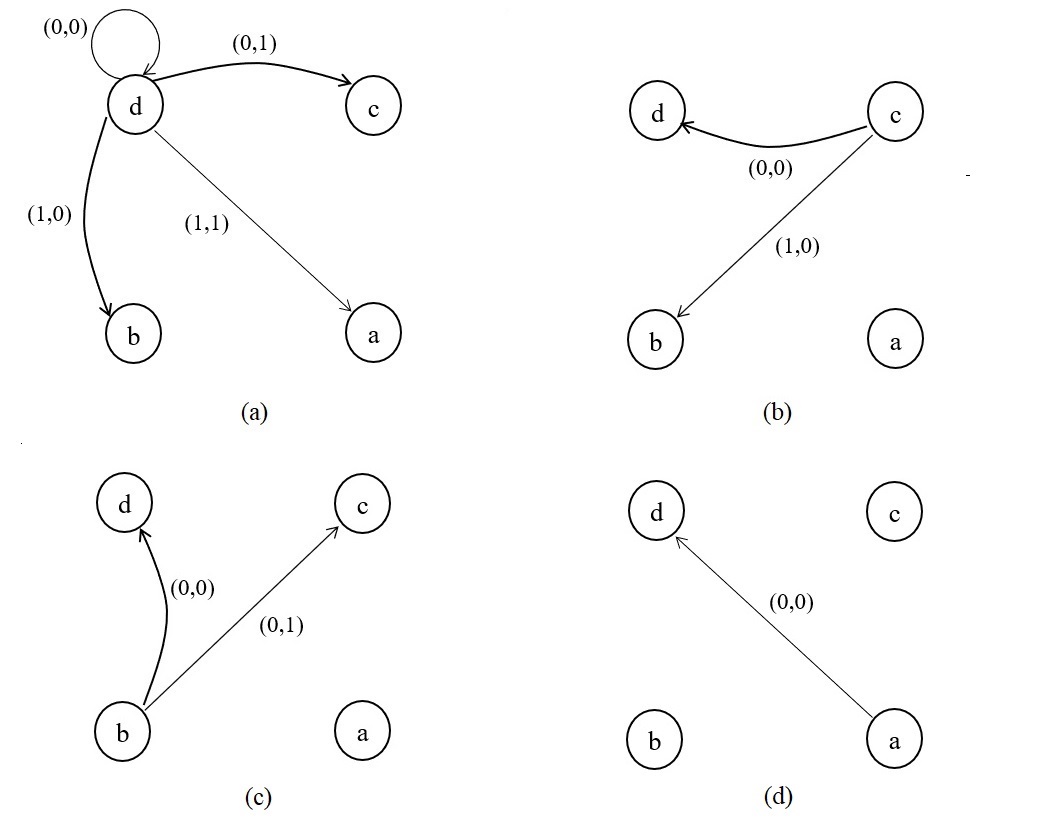}
    \caption{$\mathscr{A}_{\mu,d}$, $\mathscr{A}_{\mu,c}$ , $\mathscr{A}_{\mu,b}$ , $\mathscr{A}_{\mu,a}$ }
    \label{autoall}
\end{figure}

\begin{figure}
    \centering
    \includegraphics[scale=0.6]{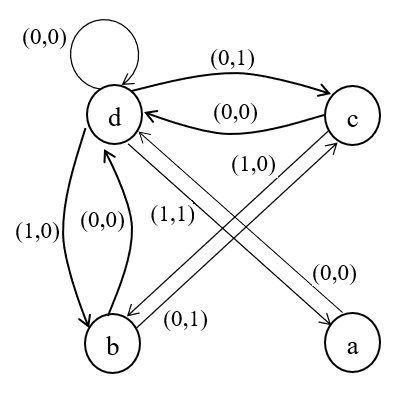}
    \caption{DFAO $\mathscr{A}_{\mu}$ generating $f_{\infty, \infty}$}
    \label{auto}
\end{figure}

For a given $m,n \ge 0$, the steps involved in obtaining the symbol at $(m,n)$ in $f_{\infty, \infty}$ are briefly explained in Example \ref{symbolgener}, using the values $m=2$ and $n=4$.. 
\begin{Example}\label{symbolgener}
Let us generate the symbol at $(2,4)$ in $f_{\infty, \infty}$. From the directive language we get $rep_S(2) = 10$ and $rep_S(4) = 101$.

The $(2,4)^{th}$ entry will be $\delta_{\mu}(d,(10,101)^0) = \delta_{\mu}(d,(010,101))$.
$$d \xrightarrow{(0,1)} c \xrightarrow{(1,0)} b \xrightarrow{(0,1)}  c.$$
\end{Example}
By generating all the symbols at $(i,j)$, $i \in \llbracket 0,s-1 \rrbracket$, $j \in \llbracket 0,t-1 \rrbracket$ the prefix of size, (s,t) of $f_{\infty, \infty}$ can be generated. More specifically any finite 2D Fibonacci array can be generated.

\section{Conclusion}
Though the theory of two-dimensional words is a natural extension of the theory of one-dimensional words, exploring their combinatorial and structural properties is not a straightforward task. In  this paper we have analysed two important properties - tandem repeats and factor complexity - of finite 2D Fibonacci words and the infinite 2D Fibonacci word. A 2D homomorphism with the infinite 2D Fibonacci word as its fixed point is presented, proving that the infinite 2D Fibonacci word is a morphic word. A DFAO generating the 2D Fibonacci words is also constructed. Some closely related properties to tandem repeats are, approximate tandem repeats and approximate periodicity of 2D Fibonacci words. Future research might concentrate on these properties.

\bibliography{mybibfile}

\end{document}